\newtheorem{theorem}{Theorem}
\theoremstyle{plain}
\newtheorem{lemma}{Lemma}
\newtheorem{remark}{Remark}
\numberwithin{equation}{section} 
\let\dsp=\displaystyle
\newcommand\beq{\begin{equation}}
\newcommand\eeq{\end{equation}}
\renewcommand{\emph}{\textbf}
\newcommand{\brk}[1]{\left(#1\right)}          
\newcommand{\Brk}[1]{\left[#1\right]}          
\newcommand{\x}{\boldsymbol{x}}
\newcommand{\bolda}{\boldsymbol{a}}
\newcommand{\e}{\epsilon}
\newcommand{\D}{\mathcal{D}}
\newcommand{\f}{\boldsymbol{f}}
\newcommand{\I}{\boldsymbol{I}}
\newcommand{\bn}{{\boldsymbol{n}}}
\renewcommand{\to}{\rightarrow}
\newcommand{\str}{{\boldsymbol{\tau}}}
\newcommand{\strs}{{\boldsymbol{\sigma}}}
\newcommand{\bu}{{\boldsymbol{u}}}
\newcommand{\bN}{{\boldsymbol{N}}}
\newcommand{\be}{\boldsymbol{e}}
\newcommand{\gbu}{{\grad\bu}}
\newcommand{\Dbu}{{\boldsymbol{D}(\bu)}}
\newcommand{\bs}{\boldsymbol{s}}
\newcommand{\cs}{\mathcal{S}}
\newcommand{\F}{\mathcal{F}}
\newcommand{\G}{\mathcal{G}}
\renewcommand{\div}{\operatorname{div}}
\newcommand{\tr}{\operatorname{tr}}
\newcommand{\grad}{\boldsymbol{\nabla}}
\newcommand{\R}{\mathbb{R}}
\newcommand{\Z}{\mathbb{Z}}
\newcommand{\RS}{\mathbb{R}^{d \times d}_S}
\newcommand{\deriv}[2]{\frac{d#1}{d#2}}
\newcommand{\pd}[2]{\frac{\partial#1}{\partial#2}}
\newcommand{\pds}[1]{\partial_{#1}}
\newcommand{\intd}{\int_{\D_t}}
\newcommand{\X}{{\boldsymbol{X}}}
\begin{document}

\title[A new model for shallow viscoelastic fluids]{A new model for shallow viscoelastic fluids}
\author{Fran\c{c}ois Bouchut}
\author{S\'ebastien \textsc{Boyaval}}
\address{\textbf{CNRS \& Universit\'e Paris-Est}, Laboratoire d'Analyse et de Math\'ematiques Appliqu\'ees, Universit\'e Paris-Est - Marne-la-Vall\'ee, 5 boulevard Descartes, Cit\'e Descartes - Champs-sur-Marne, 77454 Marne-la-Vall\'ee cedex 2 - France}
\email{francois.bouchut@univ-mlv.fr }
\urladdr{http://perso-math.univ-mlv.fr/users/bouchut.francois/}
\address{\textbf{Universit\'e Paris-Est}, Laboratoire d'hydraulique Saint Venant ( EDF R \& D -- Ecole des Ponts ParisTech -- CETMEF ), EDF R \& D 6 quai Watier, 78401 Chatou Cedex, France and \\ \textbf{INRIA}, MICMAC Project, Domaine de Voluceau, BP. 105 - Rocquencourt, 78153 Le Chesnay Cedex, France} 
\email{sebastien.boyaval@enpc.fr}
\urladdr{http://cermics.enpc.fr/~boyaval/}
\thanks{This work was completed while SB was visiting MATHICSE -- ASN chair at EPFL. SB would like to thank Marco Picasso and Jacques Rappaz for their kind hospitality.}
\date{\today}
\keywords{Viscoelastic fluids, Maxwell model, Oldroyd model, Saint Venant model, shallow-water, pseudo-conservative variables, well-balanced scheme}%

\begin{abstract}
We propose a new reduced model for gravity-driven free-surface flows of shallow viscoelastic fluids. It is obtained by an asymptotic expansion of the upper-convected Maxwell model for viscoelastic fluids. The viscosity is assumed small (of order epsilon, the aspect ratio of the thin layer of fluid), but the relaxation time is kept finite. Additionally to the classical layer depth and velocity in shallow models, our system describes also the evolution of two components of the stress. It has an intrinsic energy equation. The mathematical properties of the model are established, an important feature being the non-convexity of the physically relevant energy with respect to conservative variables, but the convexity with respect to the physically relevant pseudo-conservative variables. Numerical illustrations are given, based on a suitable well-balanced finite-volume discretization involving an approximate Riemann solver.
\end{abstract}

\maketitle
\tableofcontents

\section{Introduction: thin layer approximations of non-Newtonian flows}
\label{sec:intro}

There are many occurences of free-surface non-Newtonian flows over an inclined topography in nature, for instance geophysical flows: mud flows, landslides, debris avalanches \ldots. Their mathematical prediction is important, typically for safety reasons in connection with land use planning in the case of geophysical flows. But their modelling is still difficult, as one can conclude from the continuing intense activity in that area (see the reviews~\cite{ancey-2007,matar-kraster-2009} e.g. plus the numerous references cited therein and below). In this paper, our purpose is (i) to derive a new simple model for a thin layer of viscoelastic non-Newtonian fluid over a given topography at the bottom when the motion is essentially driven by gravity forces and (ii) to numerically investigate the prediction of that simple model in benchmark cases. 

Our methodology follows the standard derivation of the Saint-Venant model for gravity-driven shallow water flows, as developped in~\cite{gerbeau-perthame-2001} for instance. For non-Newtonian fluids, there already exist similar projects in the literature. But to our knowledge, they use different models as starting point: power-law and Bingham models~\cite{fernandez-nieto-noble-vila-2010,bresch-fernandeznieto-ionescu-vigneaux-2010}, or a kinetic model for microscopic FENE dumbbells~\cite{narbona-reina-bresch-2010} (see also the Section~\ref{sec:interpretationmicro} for comparison with a kinetic interpretation of our model using Hookean dumbbells). Here, we derive a reduced form of the \textsl{Upper-Convected Maxwell} (UCM) equations, a widely-used differential model for \textsl{viscoelastic} fluids valid in generic geometries, in the specific case of gravity-driven free-surface thin-layer flows over an inclined topography. In particular, the influence of each term in the equations is compared with the aspect ratio
\begin{equation}
 h/L \approx \e \ll 1
\label{eq:aspectratio}
\end{equation}
between the layer depth $h$ and its longitudinal characteristic length $L$ as a function of a small parameter $\e$.  

The reduced model obtained is computationally much less expensive to solve numerically than the full UCM model with an unknown free surface (compare for instance with numerical simulations in~\cite{pasquali-scriven-2002} of a full 3D model). So we can easily investigate its predictions in a number of test cases, to show the capabilities of the model. We note two important aspects from the mathematical viewpoint to discretize our model. It is endowed with a natural energy law (inherited from the UCM model) but has a non-standard hyperbolic structure (the physically relevant energy is not convex with respect to the conservative variables). These features of our model have important consequences on the numerical simulation. Whereas we can only perform numerical simulations in a formal way (because the non-standard hyperbolic structure does not fit in the usual numerical analysis), we can nevertheless confirm that they are physically meaningful (owing to the natural energy law, satisfied at the discrete level).

Regarding the literature, we would like to make two further comments in order to better situate our work and its originality.
On the one hand, numerous models for thin layers of non-Newtonian fluids have already been derived in the physics literature. We are aware of only one reduced version of the UCM model which is very close to ours, see~\cite{entov-yarin-1984,entov2006dynamics} and a sketch of that work in~\cite{renardy-2000}. But the reduced model, obtained with another methodology and with a different perspective (ad-hoc model to investigate the break-up and swell of free jets and thin films rather than asymptotic analysis of general fluid equations), finally applies in different conditions (without gravity and topography). The other models we are aware of, typically obtained either with a different methodology or (sometimes and) in different conditions (like spin coating e.g.), 
are different, see for instance~\cite{spaid-homsy-1994,kang-chen-1995,ro-homsy-1995,kalliadasis-bielarz-homsy-2000,li-luo-qi-zhang-2011}.
On the other hand, recent works in the mathematical literature also studied reduced models for thin layers of viscoelastic flows. For instance~\cite{bayada-chupin-martin-2007,bayada-chupin-grec-2009} derive reduced models for the \textsl{Oldroyd-B} (OB) system of equations, where a purely viscous component $\div(\eta_s\Dbu)$ is added to the stress term in the right-hand side of~(\ref{eq:ns}) in comparison with the UCM equations. But our project is different in essence from the thin layer models obtained for those viscoelastic flows without free surface and essentially driven by viscosity instead of gravity. Recall that here we focus on gravity-driven shallow regimes, and that is why we consider the UCM model in particular rather than the OB model (the viscosity only plays a minor role here).

In Section~\ref{sec:ucm} below, we recall the UCM model for viscoelastic fluids and some of its properties in the mathematical setting that is adequate to our model reduction. Then our new reduced model is derived in Section~\ref{sec:derivation} under a given set of clear mathematical hypotheses (which we classically cannot embed into an existence theory for solutions to the non-reduced UCM system of equations). Section~\ref{sec:properties} is devoted to the study of some mathematical properties of our new reduced model. In Section~\ref{sec:numerics}, we provide numerical simulations in benchmark situations where shallow viscoelastic flows could be advantageously modelled by our new system of equations. Last, in Section~\ref{sec:conclusion}, a physical interpretation of situations modelled by our system of equations is given in conclusion, along with threads for next studies.

\section{Mathematical setting with the Upper-Convected Maxwell model for viscoelastic fluids}
\label{sec:ucm}

The evolution for times $t\in[0,+\infty)$ of the flow of a given portion of some viscoelastic fluid confined in a moving domain $\D_t\subset\R^d$ ($d=2$ or $3$) with piecewise smooth boundary $\partial\D_t$ is governed by the following set of equations, the so-called \textsl{Upper-Convected Maxwell} (UCM) model~\cite{barnes-hutton-walters-1989,bird-curtiss-armstrong-hassager-1987a,renardy-2000}:
\begin{equation}
	\div\bu= 0 \qquad \mbox{in } \D_t ,
	\label{eq:divu}
\end{equation}
\begin{equation}
	\pds{t}\bu + (\bu\cdot\grad)\bu= -\grad p + \div\str + \f
	\qquad \mbox{in } \D_t,
	\label{eq:ns}
\end{equation}
\begin{equation}
	\pds{t}\str+(\bu\cdot\grad)\str = (\gbu)\str+\str(\gbu)^T
	+ \frac{1}{\lambda}\brk{\eta_p\Dbu-\str}
	\qquad\mbox{in } \D_t,
	\label{eq:ucm}
\end{equation}
where:
\begin{itemize}
 \item $\bu : (t,\x)\in[0,+\infty)\times\D_t \mapsto \bu(t,\x)\in\R^d$ is the velocity of the fluid,
 \item $\Dbu : (t,\x)\in[0,+\infty)\times\D_t \mapsto \Dbu(t,\x)\in\RS$, where $\RS$ denotes symmetric real $d \times d$ matrices, is the rate-of-strain tensor linked to the fluid velocity $\bu$ through the relation
\begin{equation}
	\Dbu=\frac12(\gbu+\gbu^T).
	\label{eq:D(u)}
\end{equation}
 \item $p : (t,\x)\in(0,+\infty)\times\D_t \mapsto p(t,\x)\in\R$ is the pressure, 
 \item $\str : (t,\x)\in[0,+\infty)\times\D_t \mapsto\str(t,\x)\in \RS$ is the symmetric extra-stress tensor,
 \item $\eta_p,\lambda>0$ are physical parameters, respectively a viscosity only due to the presence of elastically deformable particles in the fluid, and a relaxation time corresponding to the intrinsic dynamics of the deformable particles,
 \item $\f: (t,\x)\in [0,+\infty)\times\D_t \mapsto \f(t,\x)\in\R^d$ is a body force.
\end{itemize}
Notice that we have assumed the fluid {\sl homogeneous} (with constant mass density, hence normalized to one). We also refer to the Section~\ref{sec:conclusion} where more details about the UCM model are given along with a physical interpretation of our results. From now on, we assume translation symmetry ($d=2$), we endow $\R^2$ with a cartesian frame $(\be_x,\be_z)$ such that $\f \equiv -g \be_z$ corresponds to gravity and we assume that $\D_t$ has the following geometry (in particular, surface folding like in the case of breaking waves is not possible):
\begin{equation}
	\forall t\in[0,+\infty)\,, \quad \x=(x,z)\in\D_t \Leftrightarrow x\in(0,L), \quad  0 < z - b(x) < h(t,x),
	\label{eq:domain}
\end{equation}
where $b(x)$ is the topography elevation and $b(x)+h(t,x)$ is the \textsl{free surface} elevation of our thin layer of fluid. Note that the width $h(t,x)$ is an unknown of the problem (it is a free boundary problem). We shall denote as $a_x$ (respectively $a_z$) the component in direction $\be_x$ (resp. $\be_z$) of any vector (that is a rank-1 tensor) variable $\bolda$, and similarly the components of higher-rank tensors : $a_{xx}, a_{xz},\ldots$ We denote by $\bn:x\in(0,L)\to\bn(x)$ the unit vector of the direction normal to the bottom and
inward the fluid:
\begin{equation}
	n_x = \frac{-\pds{x}{b}}{\sqrt{1+(\pds{x}{b})^2}} \qquad n_z = \frac{1}{\sqrt{1+(\pds{x}{b})^2}} \,.
	\label{eq:normal}
\end{equation}
We supply the UCM model with boundary conditions for all $t\in (0,+\infty)$: pure slip at bottom,
\begin{equation}
	\bu\cdot\bn=0,
	\qquad\mbox{for }z=b(x),\quad x\in(0,L),
	\label{eq:pureslip}
\end{equation}
\begin{equation}
	\str\bn = \left((\str\bn)\cdot\bn\right) \bn,
	\qquad\mbox{for }z=b(x),\quad x\in(0,L),
	\label{eq:nofriction}
\end{equation}
kinematic condition at the free surface $N_t+\bN\cdot\bu=0$ where $(N_t,\bN)$ is the time-space normal, i.e.
\begin{equation}
	\pds{t}h + u_x\pds{x}(b+h)= u_z,
	\qquad\mbox{for }z=b(x)+h(t,x),\quad x\in(0,L),
	\label{eq:kinematic}
\end{equation}
no tension at the free surface,
\begin{equation}
	(p\I-\str)\cdot(-\pds{x}(b+h),1)= 0,
	\qquad\mbox{for }z=b(x)+h(t,x),\quad x\in(0,L),
	\label{eq:notension}
\end{equation}
plus (for example) inflow/outflow boundary conditions or periodicity in $x$.
We insist on~\eqref{eq:nofriction} without friction. Adding a friction term in~\eqref{eq:nofriction} would not yield the same result. 
Finally, the Cauchy problem is supplied with initial conditions 
\begin{equation}
	\bu(0,\x) = \bu^0(\x), 
	\qquad \str(0,\x) = \str^0(\x),
	\qquad h(0,x)=h^0(x),
	\label{eq:str0}
\end{equation}
assumed {\it sufficiently smooth} for a solution to exist. Note indeed that the existence theory for solutions to the UCM system~(\ref{eq:divu}--\ref{eq:ns}--\ref{eq:ucm}) is still very limited (see {\it e.g.}~\cite{joseph-renardy-saut-1985,joseph-saut-1986}), like for non-Newtonian flows with a free surface (see {\it e.g.}~\cite{lemeur-1995,lemeur-2011} for the so-called Oldroyd-B model with a viscous term in~\eqref{eq:ns}).
                                                                                                                                                                                                
Last, we recall some essential features of the UCM model~(\ref{eq:divu}--\ref{eq:str0}). Let $\strs : (t,\x)\in[0,+\infty)\times\D_t \mapsto\strs(t,\x)\in \RS$ be the symmetric conformation tensor linked to the symmetric extra-stress tensor $\str$ through the relation
\begin{equation}
	\strs = \I + \frac{2\lambda}{\eta_p}\str,
	\label{eq:defsigma}
\end{equation}
where $\I$ denotes the $d$-dimensional identity tensor. The UCM model can be written using the variable $\strs$ instead of $\str$. Indeed, $\frac{\eta_p}{2\lambda}\div\strs$ replaces $\div\str$ in~\eqref{eq:ns}, and~\eqref{eq:ucm} should be replaced with
\begin{equation}
	\pds{t}\strs+(\bu\cdot\grad)\strs = (\gbu)\strs+\strs(\gbu)^T
	+ \frac{1}{\lambda}\brk{\I-\strs}
	\qquad\mbox{in } \D_t\,.
\end{equation}
In addition, the following properties are easily derived following the same steps as in~\cite{boyaval-lelievre-mangoubi-2009} for the Oldroyd-B model (except for the absence of the dissipative viscous term $\eta_s|\Dbu|^2$). First, for physical reasons, $\strs$ should take only positive definite values (this is easily deduced when $\strs$ is interpreted as the Grammian matrix of stochastic processes, see~\cite{lelievre-2004} e.g. and Section~\ref{sec:interpretationmicro}). The initial condition~(\ref{eq:str0}) should thus be chosen so that $\strs(t=0)$ is positive definite. Provided the system~(\ref{eq:divu}--\ref{eq:str0}) has sufficiently smooth initial conditions and the velocity field $\bu$ remains sufficiently smooth, $\strs$ indeed remains positive definite (see~\cite{boyaval-lelievre-mangoubi-2009} e.g.; where the viscosity $\eta_s$ plays no role in the proof). Second, the system~(\ref{eq:divu}--\ref{eq:str0}) is endowed with an energy (the physical free energy)
\beq
\label{eq:E}
F(\bu,\str) = \intd \left( \frac12 |\bu|^2 + \frac{\eta_p}{4\lambda} \tr(\strs-\ln\strs-\I) - \f\cdot\x \right) d\x
\eeq
which, following Reynolds transport formula and~\cite{boyaval-lelievre-mangoubi-2009}, is easily shown to decay as
\beq
\label{eq:Edecay}
\deriv{}{t}  F(\bu,\str) = - \frac{\eta_p}{4\lambda^2} \intd \tr(\strs+\strs^{-1}+2\I) d\x \,.
\eeq

\section{Formal derivation of a thin layer approximation}
\label{sec:derivation}
                                                                                                                                                                                                                                                                                          
Our goal is to derive a reduced model approximating~(\ref{eq:divu}--\ref{eq:str0}) in the thin layer regime $ h\ll L $ where $L$ is a characteristic length of the flow. We follow the formal approach of~\cite{bouchut-mangeneycastelnau-perthame-vilotte-2003,bouchut-westdickenberg-2004}. Our main assumption is thus $h/L = O(\e)$, thereby introducing an adimensional parameter $\e\to0$. In the following, we simply write
$$ \text{(H1) } h = O(\e), $$
where one should infer the unit ($L$) and the limit ($\e\to0$). This corresponds to 
a rescaling of the space coordinates 
with an aspect ratio $\e$ between the vertical and horizontal dimensions.
We shall also implicitly use a characteristic time $T$ over which the physical variables vary significantly.
Then, our task can be formulated as: find a set of \textit{non-negative integers}
$$
I=(I_{u_x},I_{u_z},I_p,I_{\tau_{xx}},I_{\tau_{xz}},I_{\tau_{zz}}) 
$$
such that a closed system of equations for variables $(\tilde\bu,\tilde p,\tilde\str)$ approximating $(\bu,p,\str)$ 
holds and
\begin{equation}
\label{eq:scaling}
(\bu-\tilde\bu,p-\tilde p,\str-\tilde\str) = O(\e^I) 
\end{equation}
is a uniform approximation on $\D_t$ (where the powers are applied componentwise). Note that in~\eqref{eq:scaling} and all along the paper, $O$ has to be understood componentwise in the unit corresponding to the variable. In particular, the previous assumption~\eqref{eq:scaling} more explicitly signifies
$$
\left(\frac{T}{L}(\bu-\tilde\bu),\frac{T^2}{L^2}(p-\tilde p),\frac{T^2}{L^2}(\str-\tilde\str)\right) = O(\e^I) \,.
$$
We proceed heuristically, increasing little by little the degree of our assumptions on $I$. Hopefully, the reduced model found that way corresponds to a physically meaningful regime.

We recall that another viewpoint 
is to find a closed system of equations for depth-averages of the main variables of the system~(\ref{eq:divu}--\ref{eq:str0}). The link with our approach is as follows. The conservation of mass for an incompressible, inviscid fluid governed by~(\ref{eq:divu}) within a control volume governed by the evolution of the free-surface height as given by the kinematic boundary condition~(\ref{eq:kinematic}) and the boundary condition~(\ref{eq:pureslip}) at the bottom reads as an evolution equation for the free-surface height $h$ (using the Leibniz rule) where the depth-averaged velocity profile $\tilde u_x := \frac1h\int_b^{b+h}u_x dz$ enters,
\begin{equation}
\label{eq:h1}
\forall t,x\in[0,+\infty)\times(0,L) \quad 
0 = \int_b^{b+h}(\pds{x}u_x + \pds{z}u_z) dz = \pds{t}h + \pds{x}\left(\int_b^{b+h}u_x dz\right) = \pds{t}h + \pds{x}(h\tilde u_x)\,.
\end{equation}
The challenge in the derivation of a reduced model for thin layers is then to find a closure for the evolution of $\tilde u_x$
in terms of the variables $(\tilde\bu,\tilde p,\tilde\str)$. In particular, depth-averaging the equation for $u_x$ with the boundary conditions~(\ref{eq:kinematic}--\ref{eq:nofriction}--\ref{eq:notension}) 
gives, using again the Leibniz rule,
\begin{equation}
\label{eq:avux}
\pds{t}\left(\int_b^{b+h}u_x\,dz\right)
+ \pds{x}\left(\int_b^{b+h}\left(u_x^2+p-\tau_{xx}\right)\,dz\right)
 = \left[(\tau_{xx}-p)\pds{x}b-\tau_{xz}\right]|_b,
\end{equation}
showing that a typical problem is to write an approximation for $\int_b^{b+h}u_x^2$ and for the source term in the right-hand-side of~\eqref{eq:avux} in terms of $(\tilde\bu,\tilde p,\tilde\str)$. 

We now give the detailed system of equations~(\ref{eq:divu}--\ref{eq:str0}) in the 2-d geometry of interest:
\begin{subequations}
\begin{alignat}{1}
\label{eq:divu0}
& \pds{x}u_x + \pds{z}u_z = 0,
\\
\label{eq:ux}
& \pds{t}u_x + u_x\pds{x}u_x + u_z\pds{z}u_x = -\pds{x}p + \pds{x}\tau_{xx} + \pds{z}\tau_{xz},
\\
\label{eq:uz}
& \pds{t}u_z + u_x\pds{x}u_z + u_z\pds{z}u_z = -\pds{z}p + \pds{x}\tau_{xz} + \pds{z}\tau_{zz} - g,
\\
\label{eq:txx}
& \pds{t}\tau_{xx} + u_x\pds{x}\tau_{xx} + u_z\pds{z}\tau_{xx} = (2\pds{x}u_x)\tau_{xx} + (2\pds{z}u_x)\tau_{xz} 
+ \frac{\eta_p}\lambda\pds{x}u_x
- \frac1\lambda\tau_{xx},
\\
\label{eq:tzz}
& \pds{t}\tau_{zz} + u_x\pds{x}\tau_{zz} + u_z\pds{z}\tau_{zz} = (2\pds{x}u_z)\tau_{xz} + (2\pds{z}u_z)\tau_{zz} 
+ \frac{\eta_p}\lambda\pds{z}u_z
-\frac1\lambda\tau_{zz},
\\
\label{eq:txz}
& \pds{t}\tau_{xz} + u_x\pds{x}\tau_{xz} + u_z\pds{z}\tau_{xz} = (\pds{x}u_z)\tau_{xx} + (\pds{z}u_x)\tau_{zz} 
+ \frac{\eta_p}{2\lambda}(\pds{z}u_x + \pds{x}u_z)
- \frac1\lambda\tau_{xz},
\end{alignat}
\end{subequations}
where we have used~(\ref{eq:divu0}) to simplify~(\ref{eq:txz}). The boundary conditions (\ref{eq:pureslip}), (\ref{eq:nofriction}) and (\ref{eq:notension}) write:
\begin{subequations}
\begin{alignat}{2}
\label{eq:tang}
&
\quad u_z= (\pds{x}b) u_x
&& \text{ at } z=b\,,
\\
\label{eq:nofriction2}
&
-(\pds{x}b)\tau_{xx}+\tau_{xz} 
= -\pds{x}b \Bigl(  -(\pds{x}b)\tau_{xz}+\tau_{zz} \Bigr)
&& \text{ at } z=b\,,
\\\label{eq:notension1}
&
-\pds{x}(b+h)(p-\tau_{xx})-\tau_{xz} = 0
&& \text{ at } z=b+h\,,
\\
\label{eq:notension2}
&
\quad\pds{x}(b+h)\tau_{xz}+(p-\tau_{zz}) = 0
&& \text{ at } z=b+h,
\end{alignat}
\end{subequations}
while the kinematic condition (\ref{eq:kinematic}), following (\ref{eq:h1}), writes
\begin{equation}
	\pds{t}h + \pds{x}\left(\int_b^{b+h}u_x\, dz\right)=0.
	\label{eq:kinem}
\end{equation}


We first simplify the derivation of a thin layer regime by assuming that the tangent of the angle between $\bn$ and $\be_z$ is uniformly small 
$$ \text{(H2) } \pds{x}b = O(\e) \text{ as } \e\to0 \,, $$ 
hence only smooth topographies with small slopes are treated here. This restriction could probably be alleviated following the ideas exposed in~\cite{bouchut-westdickenberg-2004}, though at the price of complications that seem unnecessary for a first presentation of our reduced model. On the contrary, the following assumptions are essential:
$$ \text{(H3) } \eta_p = O(\e),\qquad \lambda = O(1).$$
(As explained previously, we recall that the assumptions (H3) hold in the unit of the variable, which is here $L^2/T$ and $T$ respectively.)
As usual in Saint Venant models for avalanche flows, we are looking for solutions without small scale in $t$ and $x$ (thus only with scales $T$ and $L$), but with scale of order $\e$ in $z$ (in fact, $\e L$), which can be written formally as
\begin{equation}
	\pds{t}=O(1),\quad \pds{x}=O(1),\quad\pds{z}=O(1/\e)
	\label{eq:scalvar}
\end{equation}
in the respective units $1/T,1/L,1/L$. From now on, for the sake of simplicity, we shall not write explicitly the units as functions of $T$ and $L$ wherever they come into play.

We are looking for solutions with bounded velocity $\bu$ with bounded gradient $\nabla\bu$. Thus according to (\ref{eq:scalvar}) and to (\ref{eq:tang}), we are led to the
following assumptions on the orders of magnitude 
$$
\text{(H4) }
u_x = O(1), \quad u_z = O(\e), \quad \pds{z}u_x  = O(1), \quad \text{ as } \e\to0.
$$
(A typical profile for $u_x$ reads $A(t/T,x/L) + z B(t/T,x/L)$, with any dimensional functions $A$ and $B$ of the adimensional variables $t/T$ and $x/L$.)

According to (\ref{eq:ucm}), a typical value for $\str$ is $\eta_p\Dbu$. Thus we assume accordingly that
$$
\text{(H5) }
\str = O(\e) 
\quad \text{ as } \e\to0.
$$
We deduce from above that there exists some function $u_x^0(t,x)$ depending only on $(t,x)$ such that
\begin{equation}
	u_x(t,x,z)= u_x^0(t,x) + O(\e).
	\label{eq:expu1}
\end{equation}
Then, following the classical procedure \cite{gerbeau-perthame-2001,bouchut-mangeneycastelnau-perthame-vilotte-2003,bouchut-westdickenberg-2004,marche-2007}, we find the following successive implications.
\begin{enumerate}[i{)}]
 \item From the equation~(\ref{eq:uz}) on the vertical velocity $u_z$, we get by neglecting terms in $O(\e)$
\begin{equation}
\pds{z}p = \pds{z}\tau_{zz} - g + O(\e) \,.
\end{equation}
Hence $\pds{z}p=O(1)$, and the boundary condition~(\ref{eq:notension2}) gives that $ p = O(\e) $, indeed
\begin{equation}\label{eq:p1}
p = \tau_{zz} + g(b + h - z) + O(\e^2) \,.
\end{equation}
 \item
Next, from the equation~(\ref{eq:ux}) on the horizontal velocity $u_x$ we get
\begin{equation}
\label{eq:ux1}
\pds{t}u_x^0 + u_x^0\pds{x}u_x^0 = \pds{z}\tau_{xz} + O(\e) \,.
\end{equation}
The boundary condition~(\ref{eq:nofriction2}) gives $\tau_{xz}|_{z=b} = O(\e^2)$, thus with (\ref{eq:ux1}) it yields
\begin{equation}
\label{eq:txz1}
\tau_{xz} = (\pds{t}u_x^0 + u_x^0\pds{x}u_x^0)(z-b) + O(\e^2) \,.
\end{equation}
In addition the boundary condition~(\ref{eq:notension1}) implies that $ \tau_{xz}|_{z=b+h} = O(\e^2) $. We conclude therefore that
\begin{equation}
	\pds{t}u_x^0 + u_x^0\pds{x}u_x^0 = O(\e),
	\qquad\tau_{xz} = O(\e^2).
	\label{eq:adveps}
\end{equation}
 \item
The previous result combined with the equation~(\ref{eq:txz}) on $\tau_{xz}$ implies $ \pds{z}u_x = O(\e) $, hence
\begin{equation}
\label{eq:uxscaling}
u_x(t,x,z)= u_x^0(t,x) + O(\e^2) \,.
\end{equation}
This ``motion by slices'' property is stronger than the original one (\ref{eq:expu1}).
 \item
Using~(\ref{eq:uxscaling}) and (\ref{eq:p1}) in~(\ref{eq:ux}) improves~(\ref{eq:ux1}) to
\begin{equation}
\label{eq:ux2}
\pds{t}u_x^0 + u_x^0\pds{x}u_x^0 = \pds{x}(\tau_{xx}-\tau_{zz}-g(b+h)) + \pds{z}\tau_{xz} + O(\e^2) \,,
\end{equation}
which gives, with the boundary condition~(\ref{eq:nofriction2})
$\left[\tau_{xz}-\pds{x}b(\tau_{xx}-\tau_{zz})\right]|_{z=b} = O(\e^3)$,
\begin{equation}\begin{array}{l}
	\displaystyle \tau_{xz} 
	= \left[\pds{x}b(\tau_{xx}-\tau_{zz})\right]|_{z=b} - \int_b^z \pds{x}(\tau_{xx}-\tau_{zz})\, dz\\
	\displaystyle\hphantom{\tau_{xz} =}
	 + \left(\pds{t}u_x^0 + u_x^0\pds{x}u_x^0 + g\pds{x}(b+h)\right)(z-b) + O(\e^3).
	\label{eq:tauxzb}
	\end{array}
\end{equation}
But according to~(\ref{eq:notension1}) combined with~(\ref{eq:p1}), one has
$\left[\tau_{xz}-\pds{x}(b+h)(\tau_{xx}-\tau_{zz})\right]|_{z=b+h} = O(\e^3)$,
thus with (\ref{eq:ux2})
\begin{equation}\begin{array}{l}
	\displaystyle \tau_{xz} 
	= \left[\pds{x}(b+h)(\tau_{xx}-\tau_{zz})\right]|_{z=b+h} - \int_{b+h}^z \pds{x}(\tau_{xx}-\tau_{zz})\, dz \\
	\displaystyle\hphantom{\tau_{xz} =}
	+ \left(\pds{t}u_x^0 + u_x^0\pds{x}u_x^0 + g\pds{x}(b+h)\right)(z-b-h) + O(\e^3).
	\label{eq:tauxzb+h}
	\end{array}
\end{equation}
Therefore, the difference of (\ref{eq:tauxzb}) and (\ref{eq:tauxzb+h}) yields
\begin{equation}
\label{eq:ux3}
\left(\pds{t}u_x^0 + u_x^0\pds{x}u_x^0 + g\pds{x}(b+h)\right)h = \pds{x} \left(\int_{b}^{b+h} (\tau_{xx}-\tau_{zz})\, dz \right) + O(\e^3) \,.
\end{equation}
We note that $\tau_{xz}$ is then given by (\ref{eq:tauxzb}) or (\ref{eq:tauxzb+h}) as a function of $u^0_x$ and $(\tau_{xx}-\tau_{zz})$, and the evolution equation~(\ref{eq:ux3}) for $u^0_x$ is exactly the one that one would have obtained after integrating~(\ref{eq:ux2}) in the $\be_z$ direction and using the boundary conditions~(\ref{eq:nofriction2}) and~(\ref{eq:notension1}) combined with~(\ref{eq:p1}). It can also be obtained from (\ref{eq:avux}).
 \item The result~(\ref{eq:uxscaling}) with the incompressibility condition~(\ref{eq:divu0}) and the impermeability condition~(\ref{eq:tang}) at the bottom also allows to compute the vertical component of the velocity
\begin{equation}
\label{eq:uz1} 
u_z = (\pds{x}b)u_x|_{z=b} - \int_b^z \pds{x}u_x\, dz  = (\pds{x}b)u^0_x - (z-b)\pds{x}u^0_x + O(\e^3) \,,
\end{equation}
which is of course consistent with our hypotheses about $u_z=O(\e)$.
 \item Collecting all the previous results,~(\ref{eq:txx}) and~(\ref{eq:tzz}) up to $ O(\e^2) $ give
\begin{equation}
\label{eq:tauxx-tauzz}
\left\lbrace
\begin{aligned}
& \pds{t}\tau_{xx} + u_x^0\pds{x}\tau_{xx} + \left((\pds{x}b)u^0_x - (z-b)\pds{x}u^0_x\right)\pds{z}\tau_{xx} = 2(\pds{x}u_x^0)\tau_{xx}
+ \frac{\eta_p\pds{x}u_x^0- \tau_{xx}}\lambda
+ O(\e^2),
\\
& \pds{t}\tau_{zz} + u_x^0\pds{x}\tau_{zz} + \left((\pds{x}b)u^0_x - (z-b)\pds{x}u^0_x\right)\pds{z}\tau_{zz} = - 2(\pds{x}u_x^0)\tau_{zz}
- \frac{\eta_p\pds{x}u_x^0+ \tau_{zz}}\lambda
+ O(\e^2),
\end{aligned}
\right.
\end{equation}
which closes the system of equations for the reduced model.
 \item The previous results which give $\tau_{xz}$ at order $ O(\e^3) $, that is~(\ref{eq:tauxzb}) or~(\ref{eq:tauxzb+h}), are consistent with the equation~(\ref{eq:txz}) for $\tau_{xz}$ at order $ O(\e^3) $, from which one could next obtain an approximation for $\pds{z}u_x$ up to $ O(\e^2) $, that is
  \begin{multline}
  \pds{t}\tau_{xz} + u_x^0\pds{x}\tau_{xz} + ((\pds{x}b) u_x^0  + (z-b) \pds{x} u_x^0)\pds{z}\tau_{xz} 
  + \frac1\lambda\tau_{xz}
  \\
  = \pds{x}\left((\pds{x}b) u_x^0  (z-b) + \pds{x} u_x^0\right)\left(\tau_{xx}+\frac{\eta_p}{2\lambda}\right)
  +  \pds{z}u_x \left(\tau_{zz} + \frac{\eta_p}{2\lambda}\right) 
  \end{multline}
  with $\tau_{xx}$ and $\tau_{zz}$ given up to order $ O(\e^2) $ by~(\ref{eq:tauxx-tauzz}). This procedure fixes the next term in the expansion (\ref{eq:uxscaling}).
  Note in particular that {\sl we do not have} $u_x(t,x,z)= u_x^0(t,x) + O(\e^3)$ (dependence on the vertical coordinate subsists at order $\e^2$).
\end{enumerate}

To sum up, dropping $\e$, we have obtained a closed system of equations
\begin{equation}\label{eq:reduced}
\left\lbrace
\begin{aligned}
& \pds{t}h + \pds{x}(h u_x^0) = 0,
\\
& \pds{t}(h u_x^0) + \pds{x}\left( h(u_x^0)^{2} + g\frac{h^2}2 + \int_{b}^{b+h} (\tau_{zz}-\tau_{xx})\, dz \right) = 
-g(\pds{x}b)h,
\\
& \pds{t}\tau_{xx} + u_x^0\pds{x}\tau_{xx} + \left((\pds{x}b)u^0_x - (z-b)\pds{x}u^0_x\right)\pds{z}\tau_{xx} = 2(\pds{x}u_x^0)\tau_{xx}
+ \frac{\eta_p}\lambda\pds{x}u_x^0
- \frac1\lambda\tau_{xx},
\\
& \pds{t}\tau_{zz} + u_x^0\pds{x}\tau_{zz} + \left((\pds{x}b)u^0_x - (z-b)\pds{x}u^0_x\right)\pds{z}\tau_{zz} = - 2(\pds{x}u_x^0)\tau_{zz}
- \frac{\eta_p}\lambda\pds{x}u_x^0
-\frac1\lambda\tau_{zz},
\end{aligned}
\right.
\end{equation}
which allows to compute consistently uniform asymptotic approximations of $(u_x,u_z,p,\tau_{xx},\tau_{zz},\tau_{xz})$ as variables of order $O(\e^{(0,1,1,1,1,2)})$,
up to errors in $O(\e^{(2,3,2,2,2,3)})$. These correspond to approximations of (\ref{eq:divu0})-(\ref{eq:kinem}) up to $O(\e^{(2,2,1,2,2,3,3,3,3,2,3)})$.

In (\ref{eq:reduced}), $b$ depends only on $x$, $h$ and $u_x^0$ depend on $(t,x)$, while $\tau_{xx}$ and $\tau_{zz}$ depend on $(t,x,z)$. 
However, observe that the momentum conservation equation invokes only $\int_b^{b+h}\tau_{xx}dz$ and $\int_b^{b+h}\tau_{zz}dz$, which do not depend on $z$.
Now, using Leibniz rule and the boundary conditions, it is possible to get equations for $\int_b^{b+h}\tau_{xx}dz$ and $\int_b^{b+h}\tau_{zz}dz$ (integrating those for $\tau_{xx}$ and $\tau_{zz}$, see~\eqref{eq:avadvphi} below in Section~\ref{sec:properties}) and form a closed system with the equations for the momentum and mass conservation.
Another equivalent way to derive the same closed system of equations is to assume  that $\tau_{xx}$ and $\tau_{zz}$ are independent of $z$ (at least at first-order in $\e$).
In the rest of this paper, we shall mainly be concerned with that simplified system of equations, whose mathematical properties are easier to study.

\section{The new reduced model and its mathematical properties}
\label{sec:properties}


The reduced model (\ref{eq:reduced}) is endowed with an energy equation similar to the one for the full UCM model. Obviously, the whole system of equations for $\str$ in the reduced model rewrite with the entries of the conformation tensor $\strs= \I + \frac{2\lambda}{\eta_p}\str$. However, since it is diagonal at leading order, we consider only the diagonal part
\begin{equation}
\label{eq:sigma0}
\strs^0 = 
\begin{pmatrix}\sigma_{xx}=1+\frac{2\lambda}{\eta_p}\tau_{xx}&0\\0&\sigma_{zz}=1+\frac{2\lambda}{\eta_p}\tau_{zz}\end{pmatrix}.
\end{equation}
The two last equations of (\ref{eq:reduced}) yield
\begin{equation}
\left\lbrace
\begin{aligned}
& \pds{t}\sigma_{xx} + u_x^0\pds{x}\sigma_{xx} + \left((\pds{x}b)u^0_x - (z-b)\pds{x}u^0_x\right)\pds{z}\sigma_{xx} = 2(\pds{x}u_x^0)\sigma_{xx}
- \frac1\lambda(\sigma_{xx}-1),
\\
& \pds{t}\sigma_{zz} + u_x^0\pds{x}\sigma_{zz} + \left((\pds{x}b)u^0_x - (z-b)\pds{x}u^0_x\right)\pds{z}\sigma_{zz} = - 2(\pds{x}u_x^0)\sigma_{zz}
-\frac1\lambda(\sigma_{zz}-1).
\end{aligned}
\right.
\label{eq:strs1}
\end{equation}
These equations imply that $\sigma_{xx}$ and $\sigma_{zz}$ remain positive if they are initially.
Then, we compute
\begin{equation}
	\label{eq:logsigma}
\begin{array}{l}
\displaystyle \Bigl(\pds{t} + u_x^0\pds{x} + \left((\pds{x}b)u^0_x - (z-b)\pds{x}u^0_x\right)\pds{z}\Bigr)\bigl(\frac{1}{2}\tau_{xx}-\frac{\eta_p}{4\lambda}\ln\bigl(1+\frac{2\lambda}{\eta_p}\tau_{xx}\bigr)\bigr)
	=(\pds{x}u^0_x)\tau_{xx}-\frac{1}{\eta_p}\frac{\tau_{xx}^2}{\sigma_{xx}},\\
	\displaystyle\Bigl(\pds{t} + u_x^0\pds{x} + \left((\pds{x}b)u^0_x - (z-b)\pds{x}u^0_x\right)\pds{z}\Bigr)\bigl(\frac{1}{2}\tau_{zz}-\frac{\eta_p}{4\lambda}\ln\bigl(1+\frac{2\lambda}{\eta_p}\tau_{zz}\bigr)\bigr)
	=-(\pds{x}u^0_x)\tau_{zz}-\frac{1}{\eta_p}\frac{\tau_{zz}^2}{\sigma_{zz}}.
\end{array}
\end{equation}
In order to compute the integral of (\ref{eq:logsigma}) with respect to $z$, we notice the following formula for any function $\varphi(t,x,z)$ (a combination of the Leibniz rule with boundary conditions at $z=b$ and $z=b+h$), 
\begin{equation}
	\label{eq:avadvphi}
\begin{array}{l}
\displaystyle\hphantom{=\,} \int_b^{b+h}\Bigl(\pds{t} + u_x^0\pds{x} + \left((\pds{x}b)u^0_x - (z-b)\pds{x}u^0_x\right)\pds{z}\Bigr)\varphi\,dz\\
	\displaystyle =\int_b^{b+h}\biggl(\pds{t}\varphi + \pds{x}(u_x^0\varphi) +\pds{z}\Bigl( \left((\pds{x}b)u^0_x - (z-b)\pds{x}u^0_x\right)\varphi\Bigr)\biggr)\,dz\\
	\displaystyle =\pds{t}\int_b^{b+h}\mkern -11mu\varphi\,dz-\varphi_{b+h}\pds{t}h
	+\pds{x}\int_b^{b+h}\mkern -11mu u_x^0\varphi\,dz-(u_x^0\varphi)_{b+h}\pds{x}(b+h)
	+(u_x^0\varphi)_b\partial_x b\\
	\displaystyle\mkern 80mu+\Bigl((\pds{x}b)u^0_x - h\pds{x}u^0_x\Bigr)\varphi_{b+h}-(\pds{x}b)u^0_x\varphi_b\\
	\displaystyle =\pds{t}\int_b^{b+h}\mkern -11mu\varphi\,dz
	+\pds{x}\biggl(u_x^0\int_b^{b+h}\mkern -11mu \varphi\,dz\biggr).
\end{array}
\end{equation}
Therefore, summing up the two equations of (\ref{eq:logsigma}) and integrating in $z$ gives
\begin{equation}
	\label{eq:ensigma}
\begin{array}{l}
\displaystyle\hphantom{=\ } \pds{t}\int_b^{b+h}\frac{\eta_p}{4\lambda}\tr(\strs^0-\ln\strs^0-\I)\,dz
	+\pds{x}\biggl(u_x^0\int_b^{b+h}\frac{\eta_p}{4\lambda}\tr(\strs^0-\ln\strs^0-\I)\,dz\biggr)\\
	\displaystyle =(\pds{x}u^0_x)\int_b^{b+h}\bigl(\tau_{xx}-\tau_{zz}\bigr)\,dz
	-\frac{1}{\eta_p}\int_b^{b+h}\biggl(\frac{\tau_{xx}^2}{\sigma_{xx}}+\frac{\tau_{zz}^2}{\sigma_{zz}}\biggr)\,dz.
\end{array}
\end{equation}
Moreover, the classical computation of energy for the Saint Venant model gives
\begin{equation}
\label{eq:ensw}
	\pds{t}\Bigl(h\frac{(u_x^0)^2}{2}+g\frac{h^2}{2}+gbh\Bigr)
	+\pds{x}\Bigl(\bigl(h\frac{(u_x^0)^2}{2}+gh^2+gbh\bigr)u_x^0\Bigr)
	+u_x^0\,\pds{x}\int_b^{b+h}(\tau_{zz}-\tau_{xx})\,dz=0.
\end{equation}
Adding up~(\ref{eq:ensw}) and~(\ref{eq:ensigma}) yields
\begin{equation}
	\label{eq:estimate1}
\begin{array}{l}
\displaystyle \pds{t}\left( h \frac{(u_x^0)^2}{2} + g\frac{h^2}{2} + gbh
+ \frac{\eta_p}{4\lambda} \int_b^{b+h}\tr(\strs^0-\ln\strs^0-\I)\,dz \right)
\\
\displaystyle+ \pds{x}\left(\left( h\frac{(u_x^0)^2}{2} + gh^2 + gbh
+ \frac{\eta_p}{4\lambda}\int_b^{b+h}\tr(\strs^0-\ln\strs^0-\I)\,dz 
+ \frac{\eta_p}{2\lambda}\int_{b}^{b+h} (\sigma_{zz}-\sigma_{xx})\,dz\right) u_x^0\right)
\\
\displaystyle=- \frac{\eta_p}{4\lambda^2} \int_b^{b+h}\tr(\strs^0+[\strs^0]^{-1}-2\I)\,dz.
\end{array}
\end{equation}
Therefore, we get an exact energy identity for solutions to the reduced model (\ref{eq:reduced}). Note that to discriminate between possibly many discontinuous solutions 
(generalized solutions in a sense to be defined, see below the discussion on the conservative formulation), we would naturally require an inequality in (\ref{eq:estimate1}) instead of an equality.

In the case of $\tau_{xx}$ and $\tau_{zz}$ independent of $z$, everything becomes more explicit. Using the variables $\sigma_{xx}=1+\frac{2\lambda}{\eta_p}\tau_{xx}$ and $\sigma_{zz}=1+\frac{2\lambda}{\eta_p}\tau_{zz}$ (also clearly independent of $z$), the simplified reduced model then writes
\begin{equation}
\label{eq:reducedmodel} 
\boxed{
\left\lbrace
\begin{aligned}
& \pds{t}h + \pds{x}(h u_x^0) = 0,
\\
& \pds{t}(h u_x^0) + \pds{x}\left( h(u_x^0)^{2} + g\frac{h^2}2 + \frac{\eta_p}{2\lambda}h(\sigma_{zz}-\sigma_{xx}) \right) = 
-gh\pds{x}b,
\\
& \pds{t}\sigma_{xx} + u_x^0\pds{x}\sigma_{xx}-2\sigma_{xx}\pds{x}u_x^0
=  \frac{1-\sigma_{xx}}\lambda,
\\
& \pds{t}\sigma_{zz} + u_x^0\pds{x}\sigma_{zz}+2\sigma_{zz}\pds{x}u_x^0
=  \frac{1-\sigma_{zz}}\lambda,
\end{aligned}
\right.
}
\end{equation}
while the energy inequality becomes ($\strs^0$ is defined in (\ref{eq:sigma0}))
\begin{equation}
\label{eq:estimate2}
\begin{array}{l}
\displaystyle \pds{t}\left( h \frac{(u_x^0)^2}{2} + g\frac{h^2}{2} + gbh
+ \frac{\eta_p}{4\lambda} h\tr(\strs^0-\ln\strs^0-\I) \right)\\
\displaystyle+ \pds{x}\left(\left( h\frac{(u_x^0)^2}{2} + gh^2 + gbh
+ \frac{\eta_p}{4\lambda}h\tr(\strs^0-\ln\strs^0-\I) 
 + \frac{\eta_p}{2\lambda} h (\sigma_{zz}-\sigma_{xx})\right) u_x^0\right)
\\
\displaystyle \leq - \frac{\eta_p}{4\lambda^2} h\tr(\strs^0+[\strs^0]^{-1}-2\I).
\end{array}
\end{equation}
In (\ref{eq:reducedmodel}) and (\ref{eq:estimate2}), $b$ is a function of $x$ and $h$, $u_x^0$, $\sigma_{xx}$, $\sigma_{zz}$ depend on $(t,x)$, with $h\geq 0$, $\sigma_{xx}\geq 0$, $\sigma_{zz}\geq 0$. {\it From now on, we shall only deal with the simplified reduced model}~(\ref{eq:reducedmodel}).

The inequality~(\ref{eq:estimate2}) (instead of equality) for possibly discontinuous solutions rules out generalized solutions for which the dissipation -- already present in our model ! -- is physically {\sl not enough} (see also~\cite{boyaval-lelievre-mangoubi-2009} where a similar numerical ``entropy'' condition is used to build stable finite-element schemes for the viscous UCM model, namely the so-called Oldroyd-B model).

\begin{remark}[Limit cases]\label{rem:limit}
For the system (\ref{eq:reducedmodel}), two interesting regimes are important to mention. 
The first is the standard Saint Venant regime, for which one takes $\eta_p/\lambda=0$.
It is obtained in the limit $\eta_p\to0$ for fixed $\lambda$
(as opposed to the limit $\lambda\to\infty$ for fixed $\eta_p$, some kind of ``High-Weissenberg limit''~\cite{renardy-2000} which is problematic,
as we will see in the numerical experiments).
The second regime is obtained in the ``Low-Weissenberg limit'' $\lambda\rightarrow 0$, for fixed $\eta_p$. 
Assuming $(1-\sigma_{xx})/\lambda$ and $(1-\sigma_{zz})/\lambda$ remain bounded, 
the system rewritten with $\tau_{xx}$ and $\tau_{zz}$ gives the viscous Saint Venant system
\begin{equation}\label{eq:viscsv}
\left\lbrace
\begin{aligned}
& \pds{t}h + \pds{x}(h u_x^0) = 0,
\\
& \pds{t}(h u_x^0) + \pds{x}\left( h(u_x^0)^{2} + g\frac{h^2}2 -2\eta_p h\,\pds{x}u_x^0\right) = 
-gh\pds{x}b,
\end{aligned}
\right.
\end{equation}
with the energy inequality
\begin{equation}
\label{eq:viscousenergy}
\pds{t}\left( h \frac{(u_x^0)^2}{2} + g\frac{h^2}{2} + gbh
\right)+ \pds{x}\left(\left( h\frac{(u_x^0)^2}{2} + gh^2 + gbh 
 -2\eta_p h\,\pds{x}u_x^0\right) u_x^0\right)
\leq- 2\eta_p h(\pds{x}u_x^0)^2
\,.
\end{equation}
\end{remark}

\begin{remark}[Steady states]\label{rem:steady}
The source terms $(1-\sigma_{xx})/\lambda$ and $(1-\sigma_{zz})/\lambda$ in (\ref{eq:reducedmodel}) are responsible for the right-hand side that dissipates energy in (\ref{eq:estimate2}). This dissipation has the consequence that steady states are possible only if
\begin{equation}
\tr(\strs^0+[\strs^0]^{-1}-2\I) = 0,\quad\mbox{i.e. }\str=0 \,,
\end{equation}
which implies that steady solutions to~(\ref{eq:reducedmodel}) identify with the steady solutions at rest to the standard Saint Venant model: $u^0_x = 0$, $h+b = cst$, $\sigma_{xx}=\sigma_{zz}=1$.
\end{remark}

\begin{remark}[Conservativity]\label{rem:conservativity}
The reduced model~(\ref{eq:reducedmodel}) is a first-order quasilinear system with source, but not written in conservative form because of the stress equations on $\sigma_{xx}$ and $\sigma_{zz}$. Indeed, one can put them in conservative form as follows,
\begin{equation}
\label{eq:constau}
\left\{
\begin{array}{ll}
\displaystyle
\pds{t}\left((\sigma_{xx})^{-1/2}\right) +\pds{x}\left((\sigma_{xx})^{-1/2}u_x^0\right)
=
&
\displaystyle
-\sigma_{xx}^{-3/2}\frac{1-\sigma_{xx}}{2\lambda},\vphantom{\Biggl|}
\\
\displaystyle
\pds{t}\left((\sigma_{zz})^{1/2}\right) +\pds{x}\left((\sigma_{zz})^{1/2}u_x^0\right)
=
&
\displaystyle
\sigma_{zz}^{-1/2}\frac{1-\sigma_{zz}}{2\lambda}.
\end{array}
\right.
\end{equation}
However, these conservative equations do not help since they are physically irrelevant.
Moreover, the physical energy of~(\ref{eq:estimate2}) is not convex with respect to these conservative variables $\sigma_{xx}^{-1/2}$ 
and $\sigma_{zz}^{1/2}$. 
As a matter of fact, one can show that the energy, that is 
\begin{align}
\label{eq:energy}
\widetilde E
&= h \frac{(u_x^0)^2}{2} + g\frac{h^2}{2} + gbh + \frac{\eta_p}{4\lambda}
h \left(\sigma_{xx}+\sigma_{zz}-\ln(\sigma_{xx}\sigma_{zz})-2\right)
\,,
\end{align}
cannot be convex with respect to any set of conservative variables of the form
\begin{equation}
\label{eq:generalconservativevariables}
\left(h,h u^0_x,h\varpi^{-1}\left(\frac{\sigma_{xx}^{-1/2}}{h}\right),h\varsigma^{-1}\left(\frac{\sigma_{zz}^{1/2}}{h}\right)\right),
\end{equation}
where $\varpi,\varsigma$ are smooth functions standing for general changes of variables, see Appendix~\ref{app:convexity}.
\end{remark}
Nevertheless, the system (\ref{eq:reducedmodel}) can be written in the following canonical form, strongly reminiscent of the gas dynamics system,
\begin{equation}
\label{eq:canonical}
\left\lbrace
\begin{aligned}
& \pds{t}h + \pds{x}(h u_x^0) = 0,
\\
& \pds{t}(h u_x^0) + \pds{x}\left( h(u_x^0)^{2} + P(h,\bs) \right) = 
-gh\pds{x}b,
\\
& \pds{t}\bs + u_x^0\pds{x}\bs
=  \frac{1}\lambda\cs(h,\bs),
\end{aligned}
\right.
\end{equation}
with
\begin{equation}
	\bs=\bigl(s_{xx},s_{zz}\bigr)=\left(\frac{\sigma_{xx}^{-1/2}}{h},\frac{\sigma_{zz}^{1/2}}{h}\right),
	\label{eq:bs}
\end{equation}
\begin{equation}
	\cs(h,\bs)=\left(-\frac{\sigma_{xx}^{-3/2}}{2h}(1-\sigma_{xx})
	,\frac{\sigma_{zz}^{-1/2}}{2h}(1-\sigma_{zz})\right),
	\label{eq:cs}
\end{equation}
\begin{equation}
	P(h,\bs)=g\frac{h^2}2 + \frac{\eta_p}{2\lambda}h(\sigma_{zz}-\sigma_{xx}).
	\label{eq:P}
\end{equation}
One can compute
\begin{equation}
	\left(\frac{\partial P}{\partial h}\right)_{|{\bs}}=gh+\frac{\eta_p}{2\lambda}(\sigma_{zz}-\sigma_{xx}+h\frac{2\sigma_{zz}}{h}+h\frac{2\sigma_{xx}}{h})
	=gh+\frac{\eta_p}{2\lambda}(3\sigma_{zz}+\sigma_{xx})>0,
	\label{eq:dPdh}
\end{equation}
from which we conclude that for smooth $b$, the system \eqref{eq:canonical} is hyperbolic with eigenvalues
\begin{equation}
\label{eq:eigenvalues}
\lambda_1 = u^0_x - \sqrt{gh+\frac{\eta_p}{2\lambda}(3\sigma_{zz}+\sigma_{xx})}, 
\qquad
\lambda_2= u^0_x,
\qquad
\lambda_3 = u^0_x + \sqrt{gh+\frac{\eta_p}{2\lambda}(3\sigma_{zz}+\sigma_{xx})}, 
\end{equation}
the second having double multiplicity. One can check that $\lambda_2$ is linearly degenerate, while $\lambda_1$ and $\lambda_3$ are genuinely nonlinear (this follows from 
computations similar to~\cite[Example 2.4 p.45]{godlewski-raviart-1996} and the first line of \eqref{eq:proppress}).

From the particular formulation \eqref{eq:canonical}, one sees that the jump conditions for a $2-$contact discontinuity are that $u^0_x$ and $P$ do not jump (as weak 2-Riemann invariants). However, jump conditions across $1-$ and $3-$shocks need to be chosen in order to determine weak discontinuous solutions in a unique way.

A possible choice of jump conditions is, as explained in Remark \ref{rem:conservativity}, to take the conservative formulation \eqref{eq:constau} (or equivalently a conservative
formulation related to the variables \eqref{eq:generalconservativevariables}, leading to the condition that $\bs$ does not jump through $1-$ and $3-$shocks). This formulation gives unphysical conservations and nonconvex energy (which could produce numerical under/overshoots), and we shall not make this choice.

Our choice of jump conditions will be rather imposed indirectly by numerical considerations, via the choice of a set of pseudo-conservative variables, i.e.
variables for which we shall write discrete flux difference equations. Solving nonconservative systems leads in general to convergence to unexpected solutions, as
explained in \cite{castro-et-al-2008}. With a pragmatical point of view, we nevertheless choose the pseudo-conservative variables as
\begin{equation}
\label{eq:variables}
q \equiv (q_1,q_2,q_3,q_4)^T := \left(h,h u^0_x,h\sigma_{xx},h\sigma_{zz}\right)^T.
\end{equation}
In other words, we consider the formal system
\begin{equation}
\label{eq:reducedmodelpseudocons} 
\left\lbrace
\begin{aligned}
& \pds{t}h + \pds{x}(h u_x^0) = 0,
\\
& \pds{t}(h u_x^0) + \pds{x}\left( h(u_x^0)^{2} + g\frac{h^2}2 + \frac{\eta_p}{2\lambda}h(\sigma_{zz}-\sigma_{xx}) \right) = 
-gh\pds{x}b,
\\
& \pds{t}(h\sigma_{xx}) + \pds{x}(h\sigma_{xx}u_x^0)-2h\sigma_{xx}\pds{x}u_x^0
=  \frac{h-h\sigma_{xx}}\lambda,
\\
& \pds{t}(h\sigma_{zz}) + \pds{x}(h\sigma_{zz}u_x^0)+2h\sigma_{zz}\pds{x}u_x^0
=  \frac{h-h\sigma_{zz}}\lambda.
\end{aligned}
\right.
\end{equation}
The choice of these pseudo-conservative variables is good for at least two reasons:
\begin{itemize}
 \item these variables are physically relevant,
 \item the energy $\widetilde E$ in \eqref{eq:energy} is convex with respect to them (see Appendix~\ref{app:convexity}).
\end{itemize}
The second point will make it easier to build a discrete scheme that is energy satisfying (in the sense of the energy inequality~\eqref{eq:estimate2}), while preserving the convex (in the variable $q$) set
\begin{equation}
\label{eq:invariantdomain}
\mathcal{U} = \{h\ge0,\sigma_{xx}\ge0,\sigma_{zz}\ge0\} \,,
\end{equation}
which is here the physical invariant domain where the energy inequality~\eqref{eq:estimate2} makes sense. Note that our system is of the form considered in \cite{berthon-coquel-lefloch-2011} (see also Remark \ref{Remark maxprinc}).

Let us mention that for the viscous UCM model, namely the Oldroyd-B model, various numerical techniques are proposed in \cite{lozinski-owens-2003,lee-xu-2006,boyaval-lelievre-mangoubi-2009,barrett-boyaval-2011} for the preservation of the positive-definiteness of a non-necessarily diagonal tensor $\strs$ in the context of finite-element discretizations.

\section{Finite volume method and numerical results}
\label{sec:numerics}

In this section we describe a finite volume approximation of ~\eqref{eq:reducedmodelpseudocons}.
The approximation of the full system is achieved by a fractional step approach, discretizing successively the system~\eqref{eq:reducedmodelpseudocons} without 
the relaxation source terms in $1/\lambda$ on the right-hand side of the two stress equations, and these relaxation terms alone.
The topographic source term $h\partial_x b$ is treated by the hydrostatic reconstruction method of \cite{audusse-bouchut-bristeau-klein-perthame-2004}
in Subsection \ref{sec:topo}.
This approach ensures that the whole scheme is {\sl well-balanced} with respect to the steady states of Remark \ref{rem:steady},
because the relaxation terms vanish for these solutions.

The integration of relaxation source terms is performed by a time-implicit cell-centered formula.
Note that then the scheme is not {\sl asymptotic preserving} with respect to the
viscous Saint Venant asymptotic regime $\lambda\rightarrow 0$ of Remark~\ref{rem:limit},
for this one would need a more complex treatment of these relaxation terms.

Let us now concentrate on the resolution of the system ~\eqref{eq:reducedmodelpseudocons}
without any source, i.e.
\begin{equation}
\label{eq:pseudocons} 
\left\lbrace
\begin{aligned}
& \pds{t}h + \pds{x}(h u_x^0) = 0,
\\
& \pds{t}(h u_x^0) + \pds{x}\left( h(u_x^0)^{2} + P \right) = 0,
\\
& \pds{t}(h\sigma_{xx}) + \pds{x}(h\sigma_{xx}u_x^0)-2h\sigma_{xx}\pds{x}u_x^0 =  0,
\\
& \pds{t}(h\sigma_{zz}) + \pds{x}(h\sigma_{zz}u_x^0)+2h\sigma_{zz}\pds{x}u_x^0 =  0,
\end{aligned}
\right.
\end{equation}
with
\begin{equation}
	P=g\frac{h^2}2 + \frac{\eta_p}{2\lambda}h(\sigma_{zz}-\sigma_{xx}),
	\label{eq:Ppseudo}
\end{equation}
and the energy inequality
\begin{equation}
\label{eq:entropypseudo}
\begin{array}{l}
\displaystyle \pds{t}\left( h\frac{(u_x^0)^2}{2} + g\frac{h^2}{2}
+ \frac{\eta_p}{4\lambda}h\bigl(\sigma_{xx}+\sigma_{zz}-\ln(\sigma_{xx}\sigma_{zz})-2\bigr) \right)\\
\displaystyle+ \pds{x}\left(\Bigl( h\frac{(u_x^0)^2}{2} + g\frac{h^2}{2}
+ \frac{\eta_p}{4\lambda}h\bigl(\sigma_{xx}+\sigma_{zz}-\ln(\sigma_{xx}\sigma_{zz})-2\bigr)  + P\Bigr) u_x^0\right)\leq 0.
\end{array}
\end{equation}
A finite volume scheme for the quasilinear system~\eqref{eq:pseudocons}-\eqref{eq:Ppseudo}
can be classically built following Godunov's approach, considering piecewise constant approximations
of $q=(h,hu^0_x,h\sigma_{xx},h\sigma_{zz})$, and invoking an approximate Riemann solver
at the interface between two cells.

\subsection{Approximate Riemann solver}
\label{sec:conservativescheme}

In order to get an approximate Riemann solver for \eqref{eq:pseudocons},
we use the standard relaxation approach, as described in \cite{bouchut-2004}.
It naturally handles the energy inequality~\eqref{eq:entropypseudo},
and also preserves the invariant domain~\eqref{eq:invariantdomain}.

Because of the canonical form of \eqref{eq:pseudocons}, which is \eqref{eq:canonical} without source, i.e.
\begin{equation}
\label{eq:canonicalhom}
\left\lbrace
\begin{aligned}
& \pds{t}h + \pds{x}(h u_x^0) = 0,
\\
& \pds{t}(h u_x^0) + \pds{x}\left( h(u_x^0)^{2} + P \right) = 0,
\\
& \pds{t}\bs + u_x^0\pds{x}\bs =  0,
\end{aligned}
\right.
\end{equation}
with
\begin{equation}
	\bs=\bigl(s_{xx},s_{zz}\bigr)=\left(\frac{\sigma_{xx}^{-1/2}}{h},\frac{\sigma_{zz}^{1/2}}{h}\right),
	\label{eq:bshom}
\end{equation}
we have a formal analogy with the system of full gas dynamics equations.
Therefore, we follow the usual Suliciu relaxation approach that is described in \cite{bouchut-2004}.
We introduce a new variable $\pi$, the relaxed pressure, and a variable $c>0$ intended to
parametrize the speeds. Then we solve the system
\begin{equation}
\label{eq:relax}
\left\{
\begin{aligned}
& \pds{t}h + \pds{x}(hu_x^0) = 0,
\\
& \pds{t}(hu_x^0) + \pds{x}(h(u_x^0)^2+\pi) = 0,
\\
& \pds{t}(h\pi/c^2) + \pds{x}(h\pi u_x^0/c^2 + u_x^0)= 0,
\\
& \pds{t}c + u_x^0\pds{x}c = 0,
\\
& \pds{t}\bs + u_x^0\pds{x}\bs =  0.
\end{aligned}
\right.
\end{equation}
This quasilinear system has the property of having a quasi diagonal form
\begin{equation}
\label{diagonalrelax}
\left\{
\begin{aligned}
&\pds{t}(\pi+cu^0_x) + (u^0_x+c/h)\pds{x}(\pi+cu^0_x)-\frac{u^0_x}{h}c\pds{x}c = 0 \,,
\\
&\pds{t}(\pi-cu^0_x) + (u^0_x-c/h)\pds{x}(\pi-cu^0_x)-\frac{u^0_x}{h}c\pds{x}c = 0 \,,
\\
&\pds{t}\left(1/h+\pi/c^2\right) + u^0_x\pds{x}\left(1/h+\pi/c^2\right) = 0 \,, 
\\
&\pds{t}c + u_x^0\pds{x}c =  0 \,,
\\
&\pds{t}\bs + u_x^0\pds{x}\bs =  0 \,.\end{aligned}
\right.
\end{equation}
One deduces its eigenvalues, which are $u^0_x-c/h$, $u^0_x+c/h$, and $u^0_x$ with multiplicity $4$.
One checks easily that the system is hyperbolic, with all eigenvalues linearly degenerate.
As a consequence, Rankine-Hugoniot conditions are well-defined (the weak Riemann invariants
do not jump through the associated discontinuity), and are equivalent to any conservative formulation.
We notice that with the relation \eqref{eq:bshom} the equation on $\bs$ in \eqref{eq:relax}
can be transformed back to
\begin{equation}\begin{array}{l}
	\dsp \pds{t}(h\sigma_{xx}) + \pds{x}(h\sigma_{xx}u_x^0)-2h\sigma_{xx}\pds{x}u_x^0 =  0,\\
	\dsp \pds{t}(h\sigma_{zz}) + \pds{x}(h\sigma_{zz}u_x^0)+2h\sigma_{zz}\pds{x}u_x^0 =  0.
	\label{eq:strrelax}
	\end{array}
\end{equation}
The approximate Riemann solver can be defined as follows, starting from left and right
values of $h,hu_x^0,h\sigma_{xx},h\sigma_{zz}$ at an interface :
\begin{itemize}
\item Solve the Riemann problem for \eqref{eq:relax} with initial data completed by
the relations
\begin{equation}
	\pi_l=P(h_l,(\sigma_{xx})_l,(\sigma_{zz})_l),
	\qquad\pi_r=P(h_r,(\sigma_{xx})_r,(\sigma_{zz})_r),
	\label{eq:pilpir}
\end{equation}
and with suitable values of $c_l$ and $c_r$ that will be discussed below.
\item Retain in the solution only the variables $h,hu_x^0,h\sigma_{xx},h\sigma_{zz}$.
The result is a vector called $R(x/t,q_l,q_r)$.
\end{itemize}
Note that this approximate Riemann solver $R(x/t,q_l,q_r)$ has the property to give the exact solution
for an isolated contact discontinuity (i.e. when the initial data is such that $u_x^0$
and $P$ are constant), because in this case the solution to \eqref{eq:relax} is
the solution to \eqref{eq:pseudocons} completed with $\pi=P(h,\bs)$.\\

Then, the numerical scheme is defined as follows. We consider a mesh of cells
$(x_{i-1/2},x_{i+1/2})$, $i\in\Z$, of length $\Delta x_i=x_{i+1/2}-x_{i-1/2}$,
discrete times $t_n$ with $t_{n+1}-t_n=\Delta t$, and
cell values $q_i^n$ approximating the average of $q$ over the cell $i$ at time $t_n$.
We can then define an approximate solution $q^{appr}(t,x)$ for $t_n\leq t<t_{n+1}$ and $x\in\R$ by
\begin{equation}
	q^{appr}(t,x)=R\left(\frac{x-x_{i+1/2}}{t-t_n},q_i^n,q_{i+1}^n\right)
	\quad\mbox{for }x_i<x<x_{i+1},
	\label{eq:approxsol}
\end{equation}
where $x_i=(x_{i-1/2}+x_{i+1/2})/2$. This definition is coherent under a half CFL condition,
formulated as
\begin{equation}\begin{array}{l}
	\dsp x/t<-\frac{\Delta x_i}{2\Delta t}\Rightarrow R(x/t,q_i,q_{i+1})=q_i,\\
	\dsp x/t>\frac{\Delta x_{i+1}}{2\Delta t}\Rightarrow R(x/t,q_i,q_{i+1})=q_{i+1}.
	\label{eq:CFLhalfsolver}
	\end{array}
\end{equation}
The new values at time $t_{n+1}$ are finally defined by
\begin{equation}
	q_i^{n+1}=\frac{1}{\Delta x_i}\int_{x_{i-1/2}}^{x_{i+1/2}}q^{appr}(t_{n+1}-0,x)\,dx.
	\label{eq:updateriemann}
\end{equation}
Notice that this is only in this averaging procedure that the choice of the pseudo-conservative variable
$q$ is involved. We can follow the computations of Section 2.3 in \cite{bouchut-2004}, the only difference being
that here the system is nonconservative. We deduce that
\begin{equation}
	q_i^{n+1}=q_i^n-\frac{\Delta t}{\Delta x_i}\left(\F_l(q_i^n,q_{i+1}^n)-\F_r(q_{i-1}^n,q_i^n)\right),
	\label{eq:updatediscr}
\end{equation}
where
\begin{equation}\begin{array}{l}
	\dsp \F_l(q_l,q_r)=F(q_l)-\int_{-\infty}^0\Bigl(R(\xi,q_l,q_r)-q_l\Bigr)d\xi,\\
	\dsp \F_r(q_l,q_r)=F(q_r)+\int_0^\infty\Bigl(R(\xi,q_l,q_r)-q_r\Bigr)d\xi,
	\label{eq:FlFr}
	\end{array}
\end{equation}
and the pseudo-conservative flux is
\begin{equation}
	F(q)=(hu_x^0,h(u_x^0)^2+P,h\sigma_{xx}u_x^0,h\sigma_{zz}u_x^0).
	\label{eq:pseudoconsflux}
\end{equation}
In \eqref{eq:pseudoconsflux}, the two last components are chosen arbitrarily, since
anyway the contributions of $F$ in \eqref{eq:updatediscr} cancel out.

Since the two first components of the system \eqref{eq:relax} are conservative,
the classical computations in this context give that for these two components,
the left and right numerical fluxes of \eqref{eq:FlFr} are equal and indeed take the value
of the flux of \eqref{eq:relax}, i.e. $hu_x^0$ and $h(u_x^0)^2+\pi$, at $x/t=0$.

We can notice that while solving the relaxation system \eqref{eq:relax}, the variables
$h$, $s_{xx}$ and $s_{zz}$ remain positive if they are initially (indeed this is subordinate to
the existence of a solution with positive $h$, which is seen below via explicit formulas
and under suitable choice for $c_l$, $c_r$). By the relation \eqref{eq:bshom}
this is also the case for $\sigma_{xx}$ and $\sigma_{zz}$.
Therefore, the invariant domain $\mathcal{U}$ in \eqref{eq:invariantdomain} is preserved by the numerical
scheme \eqref{eq:updatediscr}, this follows from the average formula \eqref{eq:updateriemann}
and the fact that $\mathcal{U}$ is convex (in the variable $q$).

\begin{remark} The above scheme satisfies the maximum principle on the variable $s_{xx}$,
and the minimum principle on the variable $s_{zz}$. This means that if initially one has
$s_{xx}\leq k$ for some constant $k>0$ (respectively $s_{zz}\geq k$), then
it remains true for all times.

This can be seen by observing that the set where $s_{xx}\leq k$ (respectively $s_{zz}\geq k$)
is convex in the variable $q$, because according to \eqref{eq:bshom}, \eqref{eq:variables},
it can be written as $q_1q_3\geq k^{-2}$ (respectively $k^2q_1^3-q_4\leq 0$).
Then, $\bs$ is just transported during the resolution of \eqref{eq:relax}, while the averaging
procedure \eqref{eq:updateriemann} preserves the convex sets.
Another proof is to write a discrete entropy inequality for an entropy $h\phi(s_{xx})$, which
is convex if $0\le\phi'\le s_{xx}\phi''$, take for example
$\phi(s_{xx})=\max(0,s_{xx}-k)^2/2$ (respectively for an entropy $h\phi(s_{zz})$, which
is convex if $0\le-\phi'\le 3s_{zz}\phi''$, take for example
$\phi(s_{zz})=k^{-1/3}s_{zz}-\frac{3}{2}s_{zz}^{2/3}+\frac{1}{2}k^{2/3}$
for $s_{zz}\leq k$, $\phi(s_{zz})=0$ for $s_{zz}\geq k$). We shall not write down the details
of this alternative proof.
\label{Remark maxprinc}
\end{remark}

\subsection{Energy inequality}
\label{sec:energyineq}
We define in a similar way the left and right numerical energy fluxes
\begin{equation}\begin{array}{l}
	\dsp \G_l(q_l,q_r)=G(q_l)-\int_{-\infty}^0\Bigl(E\bigl(R(\xi,q_l,q_r)\bigr)-E\bigl(q_l\bigr)\Bigr)d\xi,\\
	\dsp \G_r(q_l,q_r)=G(q_r)+\int_0^\infty\Bigl(E\bigl(R(\xi,q_l,q_r)\bigr)-E\bigl(q_r\bigr)\Bigr)d\xi,
	\label{eq:GlGr}
	\end{array}
\end{equation}
where $E$ is the energy of \eqref{eq:energy} without the topographic term $gbh$,
and
\begin{equation}
	G=(E+P)u_x^0
	\label{eq:G}
\end{equation}
is the energy flux. We have from \cite{bouchut-2004} that
a sufficient condition for the scheme to be energy satisfying is that
\begin{equation}
	\G_r(q_l,q_r)-\G_l(q_l,q_r)\leq 0.
	\label{eq:condentrdiscr}
\end{equation}
When this is satisfied, because of the convexity of $E$ with respect to $q$
one has the discrete energy inequality
\begin{equation}
	E(q_i^{n+1})-E(q_i^n)+\frac{\Delta t}{\Delta x_i}\Bigl(\G(q_i^n,q_{i+1}^n)-\G(q_{i-1}^n,q_i^n)\Bigr)\leq 0,
	\label{eq:discenineq}
\end{equation}
where the numerical energy flux $\G(q_l,q_r)$ is any function satisfying $\G_r(q_l,q_r)\leq \G(q_l,q_r)\leq \G_l(q_l,q_r)$.

In order to analyze the condition \eqref{eq:condentrdiscr}, let us introduce the internal energy $e(q)\geq 0$ by
\begin{equation}
	e=g\frac{h}{2}
+ \frac{\eta_p}{4\lambda}\bigl(\sigma_{xx}+\sigma_{zz}-\ln(\sigma_{xx}\sigma_{zz})-2\bigr),
	\label{eq:defe}
\end{equation}
so that
\begin{equation}
	E=h(u_x^0)^2/2+he,
	\label{eq:Ee}
\end{equation}
and $(\pds{h}e)_{|\bs}=P/h^2$. Then, while solving the relaxation system \eqref{eq:relax},
we solve simultaneously the equation for a new variable $\widehat e$,
\begin{equation}
	\pds{t}(\widehat e-\pi^2/2c^2) + u_x^0\pds{x}(\widehat e-\pi^2/2c^2) = 0,
	\label{eq:eqwidehate}
\end{equation}
where $\widehat e$ has left and right initial data $e(q_l)$ and $e(q_r)$. The reason
for writing \eqref{eq:eqwidehate} is that combining it with \eqref{eq:relax} yields
\begin{equation}
	\pds{t}\Bigl(h(u_x^0)^2/2+h\widehat e\Bigr)+\pds{x}\Bigl(\bigl(h(u_x^0)^2/2+h\widehat e+\pi\bigr)u_x^0\Bigr)=0.
	\label{eq:kineentropy}
\end{equation}
Define now
\begin{equation}
	\G(q_l,q_r)=\Bigl(\bigl(h(u_x^0)^2/2+h\widehat e+\pi\bigr)u_x^0\Bigr)_{x/t=0}.
	\label{eq:numentrfluxc}
\end{equation}
\begin{lemma} If for all values of $x/t$ the solution to \eqref{eq:relax}, \eqref{eq:eqwidehate} satisfies
\begin{equation}
	\widehat e\geq e(q),
	\label{eq:eehat}
\end{equation}
where here $q=R(x/t,q_l,q_r)$, then $\G_r(q_l,q_r)\leq \G(q_l,q_r)\leq \G_l(q_l,q_r)$ and the discrete energy inequality \eqref{eq:discenineq} holds.
\label{lemma entropy}
\end{lemma}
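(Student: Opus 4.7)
The plan is to exploit the \emph{conservative} energy balance \eqref{eq:kineentropy} for the augmented relaxation quantity $\widehat{E}:=h(u_x^0)^2/2+h\widehat{e}$ with flux $\widehat{G}:=(\widehat{E}+\pi)u_x^0$, and to compare it with the nonconservative numerical fluxes $\G_l,\G_r$ of \eqref{eq:GlGr}. Observe first that at $\xi=x/t\to-\infty$ the Riemann solution reduces to the left state, where by construction $\widehat e=e(q_l)$ and $\pi=P(q_l)$, so that $\widehat E=E(q_l)$ and $\widehat G=G(q_l)$; the symmetric statement holds at $\xi\to+\infty$.

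The main step is the following self-similarity identity. Since $R(\xi,q_l,q_r)$ and $\widehat e(\xi)$ depend only on $\xi=x/t$, the conservation law \eqref{eq:kineentropy} reads $\widehat G'(\xi)=\xi\widehat E'(\xi)$. Using that the solution is constant for $\xi\leq-M$ with $M$ large (thanks to the CFL-type bound on wave speeds built into the Suliciu choice of $c_l,c_r$), an integration by parts on $[-M,0]$ yields
\begin{equation*}
\widehat G(0^-)-\widehat G(q_l)=\int_{-\infty}^0 \widehat G'(\xi)\,d\xi=\int_{-\infty}^0\xi\widehat E'(\xi)\,d\xi=-\int_{-\infty}^0\bigl(\widehat E(\xi)-\widehat E(q_l)\bigr)\,d\xi.
\end{equation*}
Since $\widehat E(q_l)=E(q_l)$ and $\widehat G(q_l)=G(q_l)$, and since by definition \eqref{eq:numentrfluxc} one has $\G(q_l,q_r)=\widehat G(0)$ (the factor $u_x^0$ being continuous at the contact $\xi=0$), this rewrites
\begin{equation*}
\G(q_l,q_r)=G(q_l)-\int_{-\infty}^0\bigl(\widehat E(\xi)-E(q_l)\bigr)\,d\xi.
\end{equation*}
Subtracting the definition \eqref{eq:GlGr} of $\G_l$ and decomposing $\widehat E-E(R(\xi))=h(\xi)\bigl(\widehat e(\xi)-e(R(\xi))\bigr)$, I obtain
\begin{equation*}
\G_l(q_l,q_r)-\G(q_l,q_r)=\int_{-\infty}^0 h(\xi)\bigl(\widehat e(\xi)-e(R(\xi))\bigr)\,d\xi\geq 0,
\end{equation*}
using the hypothesis $\widehat e\geq e(q)$ and the preservation of $h\geq 0$ by the relaxation solver (established earlier in the section). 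The analogous computation on $[0,+\infty)$ gives $\G(q_l,q_r)-\G_r(q_l,q_r)\geq 0$, proving the sandwich $\G_r\leq \G\leq \G_l$.

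Finally, the discrete energy inequality \eqref{eq:discenineq} follows from the standard convex-average argument of \cite{bouchut-2004}: the conservative update \eqref{eq:updateriemann} combined with Jensen's inequality applied to the energy $E$, which is convex in the pseudo-conservative variable $q$ (Appendix~\ref{app:convexity}), gives $E(q_i^{n+1})\leq \Delta x_i^{-1}\int_{x_{i-1/2}}^{x_{i+1/2}}E(q^{appr}(t_{n+1}-0,x))\,dx$, and the right-hand side is bounded above by $E(q_i^n)-\frac{\Delta t}{\Delta x_i}(\G(q_i^n,q_{i+1}^n)-\G(q_{i-1}^n,q_i^n))$ precisely because any $\G$ in the sandwich $[\G_r,\G_l]$ controls the energy flux balance at the two interfaces of the cell. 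The anticipated obstacle is a careful handling of the integrations on unbounded $\xi$-intervals and of the jumps of $\widehat e$ at the genuine contact $\xi=0$ (where $u_x^0$ is continuous so that $\widehat G$ is well-defined); both are harmless given the finite propagation speeds of \eqref{eq:relax} and the form \eqref{eq:numentrfluxc}.
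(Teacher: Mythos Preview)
Your proof is correct and follows essentially the same approach as the paper: you derive the identity \eqref{eq:idGc} (that $\G$ admits the same integral representation as $\G_l,\G_r$ with $E(R)$ replaced by $\widehat E$) from the conservativity of \eqref{eq:kineentropy}, and then compare termwise using $\widehat e\geq e(q)$ and $h\geq 0$. The only cosmetic difference is that the paper states \eqref{eq:idGc} directly as a standard consequence of having a conservative equation, while you spell out the self-similarity relation $\widehat G'(\xi)=\xi\widehat E'(\xi)$ and integrate by parts; your additional paragraph on the Jensen/convexity step merely recapitulates what the paper already recorded before the lemma.
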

\begin{proof} Since \eqref{eq:kineentropy} is a conservative equation, one has
\begin{equation}\begin{array}{l}
	\dsp \G(q_l,q_r)=G(q_l)-\int_{-\infty}^0\Bigl(\left(h(u_x^0)^2/2+h\widehat e\right)(\xi)-E(q_l)\Bigr)d\xi\\
	\dsp\hphantom{\G(q_l,q_r)}
	=G(q_r)+\int_0^\infty\Bigl(\left(h(u_x^0)^2/2+h\widehat e\right)(\xi)-E(q_r)\Bigr)d\xi.
	\label{eq:idGc}
	\end{array}
\end{equation}
Therefore, comparing to \eqref{eq:GlGr}, we see that in order to get the result it is enough that
for all $\xi$
\begin{equation}
	E(R(\xi,q_l,q_r))\leq\left(h(u_x^0)^2/2+h\widehat e\right)(\xi),
	\label{eq:ineqproj}
\end{equation}
which is \eqref{eq:eehat}.
\end{proof}
In order to go further, we fix the following notation: in the solution to the Riemann
problem for \eqref{eq:relax}, there are three waves and two intermediate states,
denoted respectively by indices $l,*$ and $r,*$. Then we have the following
sufficient subcharacteristic condition (recall that $\pds{h}P|_{\bs}$ is given by \eqref{eq:dPdh}).
\begin{lemma}
If $c_l$, $c_r$ are chosen such that the heights $h_l^\star$, $h_r^\star$
are positive and satisfy
\begin{equation}
\begin{split}
\forall h\in[h_l,h_l^\star] \quad
h^2 \pds{h}P|_{\bs}(h,\bs_l) \le c_l^2,
\\
\forall h\in[h_r,h_r^\star] \quad
h^2 \pds{h}P|_{\bs}(h,\bs_r) \le c_r^2,
\end{split} 
\end{equation}
then \eqref{eq:eehat} holds and thus the discrete energy inequality \eqref{eq:discenineq} is valid.
\label{lemma:subchar}
\end{lemma}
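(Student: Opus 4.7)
The plan is to verify the pointwise inequality $\widehat e \ge e(q)$ in each of the four constant regions of the self-similar Riemann solution of \eqref{eq:relax}, \eqref{eq:eqwidehate}; Lemma~\ref{lemma entropy} then delivers the conclusion. Since \eqref{eq:eqwidehate} says that $\widehat e - \pi^2/(2c^2)$ is transported at speed $u_x^0$, just like $c$ and $\bs$ in \eqref{eq:relax}, this quantity is constant across the two outer waves of \eqref{diagonalrelax}; together with the fact that $\pi$ and $u_x^0$ do not jump across the middle wave, this pins down $\widehat e$ in the intermediate states.

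The first step is to describe the Riemann fan. All waves being linearly degenerate, the Rankine--Hugoniot conditions coincide with the constancy of the Riemann invariants read off \eqref{diagonalrelax}: the outer waves preserve $c$ and $\bs$, the middle wave preserves $u_x^0$ and $\pi$, so $c_l^\star=c_l$, $\bs_l^\star=\bs_l$ (and symmetrically on the right), while $\pi^\star$ and $u^\star$ are determined by the remaining jump conditions $\pi_l+c_l u^0_{x,l} = \pi^\star+c_l u^\star$, $\pi_r-c_r u^0_{x,r} = \pi^\star-c_r u^\star$, and the conservation of $1/h+\pi/c^2$ across the outer waves gives
\[
\frac{1}{h_l^\star} = \frac{1}{h_l} + \frac{\pi_l - \pi^\star}{c_l^2}, \qquad \frac{1}{h_r^\star} = \frac{1}{h_r} + \frac{\pi_r - \pi^\star}{c_r^2}.
\]
The equation on $\widehat e$ then yields in the intermediate states
\[
\widehat e_l^\star = e(q_l) + \frac{(\pi^\star)^2 - \pi_l^2}{2 c_l^2}, \qquad \widehat e_r^\star = e(q_r) + \frac{(\pi^\star)^2 - \pi_r^2}{2 c_r^2},
\]
while in the extreme states $\widehat e = e(q)$ trivially.

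The second step is to reduce the inequality $\widehat e_l^\star \ge e(q_l^\star)$ to a geometric statement. Because $\bs$ is unchanged across the left wave, $e(q_l^\star)$ equals $e$ evaluated at $(h_l^\star,\bs_l)$, and from \eqref{eq:defe} a direct computation gives $(\pds{h} e)|_{\bs} = P/h^2$ with $P$ as in \eqref{eq:P}. Setting $u=1/h$ and using the jump relation $\pi^\star - \pi_l = -c_l^2(1/h_l^\star - 1/h_l)$, the desired inequality becomes
\[
\int_{1/h_l}^{1/h_l^\star} P(1/u,\bs_l)\,du \;\ge\; \left(\frac{1}{h_l^\star}-\frac{1}{h_l}\right)\frac{\pi_l+\pi^\star}{2},
\]
which is exactly the comparison of the area under the curve $u \mapsto P(1/u,\bs_l)$ with the trapezoidal area beneath the chord of slope $-c_l^2$ joining $(1/h_l,\pi_l)$ to $(1/h_l^\star,\pi^\star)$.

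The third step is to invoke the subcharacteristic hypothesis. It rewrites as $|\tfrac{\mathrm{d}}{\mathrm{d}u} P(1/u,\bs_l)| = h^2\pds{h}P|_{\bs} \le c_l^2$ on the interval with endpoints $h_l$ and $h_l^\star$. Since the curve and the chord already coincide at $u=1/h_l$ (where both equal $\pi_l$ thanks to \eqref{eq:pilpir}), this slope bound forces the curve to lie above the chord when $h_l^\star < h_l$ and below the chord when $h_l^\star > h_l$; after accounting for the orientation of the integration interval in the change of variable, both cases yield the trapezoidal inequality. The argument for $(r,\star)$ is symmetric. Applying Lemma~\ref{lemma entropy} concludes the proof. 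The main technical obstacle is a careful sign bookkeeping in this geometric comparison, since the two possible orderings of $h_l$ and $h_l^\star$ must be treated in a way that produces the same inequality after the transformation $u=1/h$ reverses orientation.
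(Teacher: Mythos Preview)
Your argument is correct and is precisely the standard subcharacteristic dissipation argument that the paper invokes by reference to Lemma~2.20 in \cite{bouchut-2004}; you have simply written it out in full rather than citing it. The reduction to the trapezoidal inequality and the two-case sign analysis are the right way to make the estimate explicit, and your computation of the intermediate states matches \eqref{eq:l*r*}--\eqref{eq:el*er*}.
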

\begin{proof}
The arguments of decomposition in elementary dissipation terms along the waves
used in Lemma 2.20 in~\cite{bouchut-2004} can be checked to apply
without modification.
\end{proof}
\begin{lemma} Denote
\begin{equation}
	P_l=P(h_l,\bs_l),\quad P_r=P(h_r,\bs_r),\qquad
	a_l=\sqrt{\pds{h}P|_{\bs}(h_l,\bs_l) }, \quad a_r=\sqrt{\pds{h}P|_{\bs}(h_r,\bs_r) },
	\label{eq:lrsoundspeeds}
\end{equation}
and define the relaxation speeds $c_l$, $c_r$ by
\begin{equation}
	\begin{array}{l}
	\dsp\frac{c_l}{h_l} =  a_l+ 2\left(\max\Bigl(0,u^0_{x,l} - u^0_{x,r}\Bigr) + \frac{\max\Bigl(0,P_r-P_l\Bigr)}{h_l a_l + h_r a_r}\right),\\
	\dsp \frac{c_r}{h_r}=a_r+2\left(\max\Bigl(0,u^0_{x,l} - u^0_{x,r}\Bigr) + \frac{\max\Bigl(0,P_l-P_r\Bigr)}{h_l a_l + h_r a_r}\right).
	\end{array}
	\label{eq:cformulae}
\end{equation}
Then the positivity and subcharacteristic conditions of Lemma \ref{lemma:subchar} are
satisfied, and the discrete energy inequality \eqref{eq:discenineq} holds.
\label{lemma:speeds}
\end{lemma}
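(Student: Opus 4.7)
The plan is to follow the classical Suliciu relaxation analysis of~\cite{bouchut-2004}, adapted to the fact that the ``pressure'' $P$ in our system depends also on the auxiliary variable $\bs$. The key structural observation is that $\bs$ is purely transported by the central contact wave of~\eqref{eq:relax} and is therefore frozen across the two acoustic waves $u^0_x\pm c/h$. The problem thus reduces, on each side of the contact, to a standard gas-dynamics-type Suliciu analysis with a frozen pressure law $P(\cdot,\bs_l)$ on the left and $P(\cdot,\bs_r)$ on the right, for which the statement is the analogue of the subcharacteristic choice of speeds derived in~\cite{bouchut-2004}.

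First I would record the explicit intermediate heights. Since all eigenvalues of~\eqref{eq:relax} are linearly degenerate, Rankine-Hugoniot coincides with conservation of the weak Riemann invariants $\pi\pm c u^0_x$, $1/h+\pi/c^2$, $c$ and $\bs$ across the corresponding waves, together with continuity of $u^0_x$ and $\pi$ across the contact. A direct computation with $\pi_l=P_l$ and $\pi_r=P_r$ from~\eqref{eq:pilpir} gives
\begin{equation*}
\frac{c_l}{h_l^\star}=\frac{c_l}{h_l}+\frac{c_r(u^0_{x,r}-u^0_{x,l})+P_l-P_r}{c_l+c_r},\qquad \frac{c_r}{h_r^\star}=\frac{c_r}{h_r}+\frac{c_l(u^0_{x,r}-u^0_{x,l})+P_r-P_l}{c_l+c_r}.
\end{equation*}
Plugging the prescribed values of $c_l/h_l$ and $c_r/h_r$ from~\eqref{eq:cformulae}, the additive corrections $2\max(0,u^0_{x,l}-u^0_{x,r})$ and $2\max(0,P_r-P_l)/(h_la_l+h_ra_r)$ (resp.\ $2\max(0,P_l-P_r)/(h_la_l+h_ra_r)$) are designed precisely so that these right-hand sides remain strictly positive. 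This establishes $h_l^\star,h_r^\star>0$ by a direct algebraic verification that distinguishes the signs of $u^0_{x,l}-u^0_{x,r}$ and $P_l-P_r$.

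The main obstacle is then the subcharacteristic inequality. Using~\eqref{eq:dPdh} together with $\sigma_{xx}=(hs_{xx})^{-2}$ and $\sigma_{zz}=(hs_{zz})^2$, one checks that
\begin{equation*}
\phi(h):=h^2\pds{h}P|_{\bs}(h,\bs_l)=gh^3+\frac{\eta_p}{2\lambda}\bigl(3h^4 s_{zz,l}^2+s_{xx,l}^{-2}\bigr)
\end{equation*}
is strictly increasing in $h>0$, so the supremum of $\phi$ on $[h_l,h_l^\star]$ is attained at $\max(h_l,h_l^\star)$. The base case $\phi(h_l)=h_l^2 a_l^2\le c_l^2$ is immediate from~\eqref{eq:cformulae}. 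The delicate case is $h_l^\star>h_l$: the excess $\phi(h_l^\star)-\phi(h_l)$ must be absorbed by the extra speed $c_l-h_l a_l$ coming from the $\max$ corrections. I would bound $h_l^\star-h_l$ from above via the formula for $c_l/h_l^\star$ recorded above together with the positive parts $(u^0_{x,l}-u^0_{x,r})^+$ and $(P_r-P_l)^+$, and then invoke the precise form of $c_l/h_l$ in~\eqref{eq:cformulae} to close the estimate. This is precisely the computation performed in~\cite{bouchut-2004} (referenced already in the preceding lemma), which transfers essentially verbatim since $\bs_l$ is constant across the left wave; the right side is treated symmetrically. The discrete energy inequality~\eqref{eq:discenineq} then follows from Lemma~\ref{lemma:subchar}.
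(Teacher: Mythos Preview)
Your outline is sound and heads in the right direction, but it differs from the paper's proof in one structural respect and contains a small misattribution.

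The paper does not verify positivity and the subcharacteristic condition by hand. Instead it introduces the Lagrangian sound speed $\varphi(h,\bs)=h\sqrt{\pds{h}P|_{\bs}}$, computes $\pds{h}\varphi|_{\bs}$ explicitly from \eqref{eq:pPh}, and checks the three structural properties
\[
\pds{h}\varphi|_{\bs}>0,\qquad \varphi(h,\bs)\to\infty\ \text{as }h\to\infty,\qquad \pds{h}\varphi|_{\bs}\le 2\sqrt{\pds{h}P|_{\bs}}.
\]
The last inequality is the crucial one: it is exactly what fixes the coefficient $2$ in front of the correction terms in \eqref{eq:cformulae}. With these properties in hand, the paper invokes Proposition~3.2 of \cite{bouchut-klingenberg-waagan-2010} with $\alpha=2$, which packages both the positivity of $h_l^\star,h_r^\star$ and the subcharacteristic bound in one stroke. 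Your ``delicate case'' argument would in effect be reproving that proposition; what you are missing is the clean identification of the growth condition $\pds{h}\varphi\le \alpha\sqrt{\pds{h}P}$ as the hypothesis that makes the whole machinery run with this particular $\alpha$.

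Note also the reference: the speed formulas \eqref{eq:cformulae} are \emph{not} those of \cite{bouchut-2004} (Proposition~2.18 there gives a different choice, which the paper mentions as an alternative in Remark~\ref{Remark speeds}); they come from \cite{bouchut-klingenberg-waagan-2010}. So ``precisely the computation performed in~\cite{bouchut-2004}'' is not quite right, and transferring that computation ``verbatim'' would not match \eqref{eq:cformulae}.
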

\begin{proof} From \eqref{eq:dPdh} and \eqref{eq:bshom} we have
\begin{equation}
	\pds{h}P|_{\bs}=gh+\frac{\eta_p}{2\lambda}\Bigl(3(hs_{zz})^2+\frac{1}{(hs_{xx})^2}\Bigr).
	\label{eq:pPh}
\end{equation}
Denoting $\varphi(h,\bs)=h\sqrt{\pds{h}P|_{\bs}}$, we compute
\begin{equation}\begin{array}{l}
	\dsp \pds{h}\varphi|_{\bs}=\sqrt{\pds{h}P|_{\bs}}
	+\frac{h}{2\sqrt{\pds{h}P|_{\bs}}}\left(g+\frac{\eta_p}{2\lambda}\Bigl(
	6hs_{zz}^2-\frac{2}{h^3s_{xx}^2}\Bigr)\right)\\
	\dsp\hphantom{\pds{h}\varphi|_{\bs}}
	=\frac{1}{2\sqrt{\pds{h}P|_{\bs}}}\left(2gh+\frac{\eta_p}{\lambda}\Bigl(3(hs_{zz})^2+\frac{1}{(hs_{xx})^2}\Bigr)
	+gh+\frac{\eta_p}{2\lambda}\Bigl(
	6(hs_{zz})^2-\frac{2}{(hs_{xx})^2}\Bigr)\right)\\
	\dsp\hphantom{\pds{h}\varphi|_{\bs}}
	=\frac{1}{2\sqrt{\pds{h}P|_{\bs}}}\left(3gh+6\frac{\eta_p}{\lambda}(hs_{zz})^2\right).
	\label{eq:pdsphi}
	\end{array}
\end{equation}
Therefore, we deduce that $\varphi$ satisfies
\begin{equation}\begin{array}{c}
	\dsp \pds{h}\varphi|_{\bs}>0,\\
	\dsp\varphi(h,\bs)\rightarrow\infty\quad\mbox{as }h\rightarrow\infty,\\
	\dsp \pds{h}\varphi|_{\bs}\leq 2\sqrt{\pds{h}P|_{\bs}}.
	\label{eq:proppress}
	\end{array}
\end{equation}
Following [Proposition 3.2]~\cite{bouchut-klingenberg-waagan-2010}
with $\alpha=2$, we get the result.
\end{proof}
\begin{remark}[Bounds on the propagation speeds] Lemma \ref{lemma:speeds} is also
valid with the formulas of  [Proposition 2.18]~\cite{bouchut-2004} instead of \eqref{eq:cformulae}.
Here we prefer \eqref{eq:cformulae} because in the context of possibly negative pressure $P$
these formulas ensure the following estimate on the propagation speeds:
\begin{equation}
	\max\left(\frac{c_l}{h_l},\frac{c_r}{h_r}\right)
	\leq C\left(|u_{x,l}^0|+|u_{x,r}^0|+a_l+a_r\right),
	\label{eq:estspeeds}
\end{equation}
with $C$ an absolute constant. This follows from the property that $|P|\leq  h\pds{h}P|_{\bs}$,
which is seen on \eqref{eq:P}-\eqref{eq:dPdh}.
\label{Remark speeds}
\end{remark}

\subsection{Numerical fluxes and CFL condition}
\label{sec:numflux}
The Riemann problem for the relaxation system \eqref{eq:relax}, \eqref{eq:eqwidehate}
has to be solved with initial data $q_l$, $q_r$ completed with \eqref{eq:pilpir},
the relation \eqref{eq:bshom}, $\widehat e_l=e(q_l)\equiv e_l$,   $\widehat e_r=e(q_r)\equiv e_r$,
and \eqref{eq:lrsoundspeeds}, \eqref{eq:cformulae}.
The explicit solution is given, according to \cite{bouchut-2004}, by the following formulae.
It has three waves speeds $\Sigma_1<\Sigma_2<\Sigma_3$,
\begin{equation}
	\Sigma_1=u_{x,l}^0-c_l/h_l,\qquad\Sigma_2=u_{x,*}^0,\qquad\Sigma_3=u_{x,r}^0+c_r/h_r,
	\label{eq:Sigmas}
\end{equation}
and the variables take the value "l" for $x/t<\Sigma_1$, 
"l*" for $\Sigma_1<x/t<\Sigma_2$, "r*" for $\Sigma_2<x/t<\Sigma_3$, "r" for $\Sigma_3<x/t$.
The "l*" and "r*" values are given by
\begin{equation}\begin{array}{c}
	\dsp (u_x^0)_l^*=(u_x^0)_r^*=u_{x,*}^0=\frac{c_lu_{x,l}^0+c_ru_{x,r}^0+\pi_l-\pi_r}{c_l+c_r},\qquad
	\pi_l^*=\pi_r^*=\frac{c_r\pi_l+c_l\pi_r-c_lc_r(u_{x,r}^0-u_{x,l}^0)}{c_l+c_r},\vphantom{\Biggl|}\\
	\dsp \frac{1}{h_l^*}=\frac{1}{h_l}+\frac{c_r(u_{x,r}^0-u_{x,l}^0)+\pi_l-\pi_r}{c_l(c_l+c_r)},
	\qquad\frac{1}{h_r^*}=\frac{1}{h_r}+\frac{c_l(u_{x,r}^0-u_{x,l}^0)+\pi_r-\pi_l}{c_r(c_l+c_r)},\vphantom{\Biggl|}
	\label{eq:l*r*}
	\end{array}
\end{equation}
\begin{equation}
	c_l^*=c_l,\quad c_r^*=c_r,\quad \bs_l^*=\bs_l,\quad \bs_r^*=\bs_r,
	\label{eq:sl*sr*}
\end{equation}
\begin{equation}
	\sigma_{xx,l}^*=\sigma_{xx,l}\left(\frac{h_l}{h_l^*}\right)^2,\ 
	\sigma_{xx,r}^*=\sigma_{xx,r}\left(\frac{h_r}{h_r^*}\right)^2,\ 
	\sigma_{zz,l}^*=\sigma_{zz,l}\left(\frac{h_l^*}{h_l}\right)^2,\ 
	\sigma_{zz,r}^*=\sigma_{zz,r}\left(\frac{h_r^*}{h_r}\right)^2,
	\label{eq:sigm*}
\end{equation}
\begin{equation}
	\widehat e_l^*=e_l-\frac{(\pi_l)^2}{2c_l^2}+\frac{(\pi_l^*)^2}{2c_l^2},\qquad
	\widehat e_r^*=e_r-\frac{(\pi_r)^2}{2c_r^2}+\frac{(\pi_r^*)^2}{2c_r^2}.
	\label{eq:el*er*}
\end{equation}
Then we need to compute the left/right numerical fluxes \eqref{eq:FlFr} that are involved
in the update formula \eqref{eq:updatediscr}. Since the $h$ and $hu_x^0$ components
in \eqref{eq:relax} are conservative, classical computations give the associated numerical fluxes,
and we have
\begin{equation}
	\F_l=\Bigl(\F^h,\F^{hu_x^0},\F_l^{h\sigma_{xx}},\F_l^{h\sigma_{zz}}\Bigr),\qquad
	\F_r=\Bigl(\F^h,\F^{hu_x^0},\F_r^{h\sigma_{xx}},\F_r^{h\sigma_{zz}}\Bigr),
	\label{eq:lrfluxes}
\end{equation}
where the conservative part involves the Riemann solution evaluated at $x/t=0$,
\begin{equation}
	\F^h=(hu_x^0)_{x/t=0},\qquad \F^{hu_x^0}=(h(u_x^0)^2+\pi)_{x/t=0}.
	\label{eq:h,hunumfluxes}
\end{equation}
More explicitly, \eqref{eq:h,hunumfluxes} yields that the quantities between parenthese
are evaluated at "l" if $\Sigma_1\geq 0$, at "l*" if $\Sigma_1\leq 0\leq\Sigma_2$,
at "r*" if $\Sigma_2\leq 0\leq \Sigma_3$, and at "r" if $\Sigma_3\leq 0$. As usual there is no
ambiguity in the resulting value when equality occurs in these conditions.
The numerical energy flux \eqref{eq:numentrfluxc} involved in \eqref{eq:discenineq}
can be computed in the same way.

We complete these formulas by computing the left/right numerical fluxes for the variables
$h\sigma_{xx}$, $h\sigma_{zz}$ from \eqref{eq:FlFr},
\begin{equation}\begin{array}{l}
	\dsp \F_l^{h\sigma_{xx}}
	=(h\sigma_{xx}u_x^0)_l+\min(0,\Sigma_1)\Bigl((h\sigma_{xx})_l^*-(h\sigma_{xx})_l\Bigr)\\
	\dsp\mkern 150mu
	+\min(0,\Sigma_2)\Bigl((h\sigma_{xx})_r^*-(h\sigma_{xx})_l^*\Bigr)
	+\min(0,\Sigma_3)\Bigl((h\sigma_{xx})_r-(h\sigma_{xx})_r^*\Bigr),
	\label{eq:Fnonconsl}
	\end{array}
\end{equation}
\begin{equation}\begin{array}{l}
	\dsp \F_r^{h\sigma_{xx}}
	=(h\sigma_{xx}u_x^0)_r-\max(0,\Sigma_1)\Bigl((h\sigma_{xx})_l^*-(h\sigma_{xx})_l\Bigr)\\
	\dsp\mkern 150mu
	-\max(0,\Sigma_2)\Bigl((h\sigma_{xx})_r^*-(h\sigma_{xx})_l^*\Bigr)
	-\max(0,\Sigma_3)\Bigl((h\sigma_{xx})_r-(h\sigma_{xx})_r^*\Bigr),
	\label{eq:Fnonconsr}
	\end{array}
\end{equation}
the $h\sigma_{zz}$ fluxes being computed with the same formulas, replacing
"xx" by "zz".

The maximal propagation speed is then
\begin{equation}
	A(q_l,q_r)=\max(|\Sigma_1|,|\Sigma_2|,|\Sigma_3|),
	\label{eq:maxspeed}
\end{equation}
and the CFL condition \eqref{eq:CFLhalfsolver} becomes
\begin{equation}
	\Delta t A(q_i,q_{i+1})\leq\frac{1}{2}\min(\Delta x_i,\Delta x_{i+1}).
	\label{eq:CFLhalf}
\end{equation}
Not that with \eqref{eq:estspeeds} and \eqref{eq:Sigmas} we get
\begin{equation}
	A(q_l,q_r)\leq C\left(|u_{x,l}^0|+|u_{x,r}^0|+a_l+a_r\right)
	\label{eq:boundspeed}
\end{equation}
with $C$ an absolute constant, bounding the propagation speeds of the approximate Riemann solver
whenever the left and right true speeds remain bounded.
This property is more general than the possibility of treating data with vacuum
considered in \cite{bouchut-2004}.

We have obtained finally the following theorem.
\begin{theorem} Consider the system \eqref{eq:pseudocons} with the pressure law \eqref{eq:Ppseudo},
and denote the pseudo-conservative variable by $q=(h,hu^0_x,h\sigma_{xx},h\sigma_{zz})$.
Under the CFL condition \eqref{eq:CFLhalf},
the scheme \eqref{eq:updatediscr} with the numerical fluxes $\F_l(q_l,q_r)$, $\F_r(q_l,q_r)$
defined above via \eqref{eq:lrfluxes}, and with the choice of the speeds \eqref{eq:lrsoundspeeds}, \eqref{eq:cformulae},
satisfies the following properties.

\noindent (i) It is consistent with \eqref{eq:pseudocons}-\eqref{eq:Ppseudo} for smooth solutions,

\noindent (ii) It keeps the positivity of $h$, $\sigma_{xx}$, $\sigma_{zz}$,

\noindent (iii) It is conservative in the variables $h$ and $hu_x^0$,

\noindent (iv) It satisfies the discrete energy inequality \eqref{eq:discenineq},

\noindent (v) It satisfies the maximum principle on the variable $s_{xx}$,
and the minimum principle on the variable $s_{zz}$,

\noindent (vi) Steady contact discontinuities where $u_x^0=0$, $P=cst$ are exactly resolved,

\noindent (vii) Data with bounded propagation speeds give finite numerical propagation speed.

\noindent (viii) The numerical viscosity is sharp, in the sense that the propagation speeds
$\Sigma_i$ of the approximate Riemann solver tend to the exact propagation speeds when
the left and right states $q_l$, $q_r$ tend to a common value.
\label{theorem:scheme}
\end{theorem}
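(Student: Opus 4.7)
The plan is to verify each item by invoking the structural properties of the approximate Riemann solver that have already been established in Subsections \ref{sec:conservativescheme}--\ref{sec:numflux}, since this theorem is primarily a synthesis of those results. I would begin by observing that when $q_l=q_r=q$, the completion \eqref{eq:pilpir} gives $\pi_l=\pi_r=P(q)$, so the intermediate states in \eqref{eq:l*r*}--\eqref{eq:sigm*} collapse to $q$ and $R(\xi,q,q)\equiv q$. Plugging this into \eqref{eq:FlFr} and using the explicit pseudo-conservative flux \eqref{eq:pseudoconsflux} yields $\mathcal{F}_l(q,q)=\mathcal{F}_r(q,q)=F(q)$, which, together with the standard Taylor expansion argument applied to \eqref{eq:updatediscr}, gives consistency (i) with \eqref{eq:pseudocons}-\eqref{eq:Ppseudo}.

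For (ii), the Riemann-solver formulas \eqref{eq:l*r*} yield $h_l^*,h_r^*>0$ precisely under the positivity condition of Lemma \ref{lemma:subchar}, which the choice \eqref{eq:cformulae} ensures via Lemma \ref{lemma:speeds}; since $\sigma_{xx}^*,\sigma_{zz}^*$ in \eqref{eq:sigm*} are then positive whenever the left/right states are, the pointwise approximate solution $q^{\text{appr}}(t,x)$ stays in $\mathcal{U}$. The convexity of $\mathcal{U}$ in the $q$-variable, together with the averaging formula \eqref{eq:updateriemann}, propagates this positivity to $q_i^{n+1}$. Item (v) follows by exactly the same convexity-plus-averaging argument applied to the convex sets $\{q_1q_3\ge k^{-2}\}$ and $\{k^2 q_1^3-q_4\le 0\}$ identified in Remark \ref{Remark maxprinc}, noting that along the relaxation waves $\bs$ is only transported.

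Item (iii) is immediate: the $h$ and $hu_x^0$ equations in the relaxation system \eqref{eq:relax} are conservative, so the standard computation of \eqref{eq:FlFr} gives $\mathcal{F}_l^h=\mathcal{F}_r^h=\mathcal{F}^h$ and $\mathcal{F}_l^{hu_x^0}=\mathcal{F}_r^{hu_x^0}=\mathcal{F}^{hu_x^0}$ as displayed in \eqref{eq:h,hunumfluxes}. Item (iv) is exactly the discrete energy inequality \eqref{eq:discenineq}, which holds under the subcharacteristic conditions of Lemma \ref{lemma:subchar} and hence under the choice \eqref{eq:cformulae} by Lemma \ref{lemma:speeds}. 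Item (vi) follows from the observation noted after \eqref{eq:pilpir}: when $u_x^0$ and $P$ are constant across the interface, \eqref{eq:l*r*} collapses to the common state and $R(\xi,q_l,q_r)$ is the exact contact, so no numerical diffusion is introduced and the initial data is preserved by \eqref{eq:updateriemann}.

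For (vii) I would invoke Remark \ref{Remark speeds}: the choice \eqref{eq:cformulae} together with the bound $|P|\le h\,\partial_h P|_{\bs}$ (read off from \eqref{eq:P} and \eqref{eq:dPdh}) yields \eqref{eq:estspeeds}, and combining with \eqref{eq:Sigmas} gives \eqref{eq:boundspeed}, which is the claim. Finally (viii) follows by taking $q_l,q_r\to q$ in \eqref{eq:cformulae}: the $\max(0,\cdot)$ correction terms vanish, so $c_l/h_l\to a$ and $c_r/h_r\to a$ with $a=\sqrt{\partial_h P|_{\bs}(q)}$, while $u_{x,*}^0\to u_x^0$ in \eqref{eq:l*r*}, so the three speeds $\Sigma_1,\Sigma_2,\Sigma_3$ in \eqref{eq:Sigmas} converge to the exact eigenvalues $u_x^0-a$, $u_x^0$, $u_x^0+a$ from \eqref{eq:eigenvalues}. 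The only mildly delicate point is keeping track of the nonconservative fluxes \eqref{eq:Fnonconsl}--\eqref{eq:Fnonconsr} in the positivity/consistency arguments, but since $\bs$ is transported at speed $u_x^0=\Sigma_2$ in \eqref{eq:relax}, the jumps of $h\sigma_{xx}$ and $h\sigma_{zz}$ across the outer waves are entirely accounted for by the change in $h$ through \eqref{eq:sigm*}, and no additional obstruction arises.
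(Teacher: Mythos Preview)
Your proposal is correct and mirrors the paper's approach: the theorem is stated in the paper as a summary (``We have obtained finally the following theorem'') with no separate proof, precisely because each item has already been justified in the preceding discussion (positivity and invariant-domain preservation after \eqref{eq:strrelax}, Remark~\ref{Remark maxprinc} for (v), Lemmas~\ref{lemma entropy}--\ref{lemma:speeds} for (iv), Remark~\ref{Remark speeds} and \eqref{eq:boundspeed} for (vii), the observation after \eqref{eq:pilpir} for (vi)). Your synthesis is the intended argument.

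One small imprecision in your treatment of (vi): when $u_x^0$ and $P$ are constant across the interface, \eqref{eq:l*r*} does \emph{not} ``collapse to the common state'' (there is none, since $h_l\neq h_r$ in general); rather, $h_l^*=h_l$, $h_r^*=h_r$, so $q_l^*=q_l$ and $q_r^*=q_r$, and the approximate solver reproduces the exact single-jump contact at $\xi=\Sigma_2=u_x^0$. The additional hypothesis $u_x^0=0$ in the theorem is what places this jump at the cell interface, so that the averaging \eqref{eq:updateriemann} returns $q_i^{n+1}=q_i^n$ exactly; for a moving contact the solver is still exact but the averaging would smear it. You should make that dependence on $u_x^0=0$ explicit.
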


\subsection{Topography treatment}
\label{sec:topo}
Consider now our system \eqref{eq:reducedmodelpseudocons} with topography,
but without  the relaxation source terms, i.e.
\begin{equation}
\label{eq:systemtopo}
\left\lbrace
\begin{aligned}
& \pds{t}h + \pds{x}(h u_x^0) = 0,
\\
& \pds{t}(h u_x^0) + \pds{x}\left( h(u_x^0)^{2} + g\frac{h^2}2 + \frac{\eta_p}{2\lambda}h(\sigma_{zz}-\sigma_{xx}) \right) = 
-gh\pds{x}b,
\\
& \pds{t}(h\sigma_{xx}) + \pds{x}(h\sigma_{xx}u_x^0)-2h\sigma_{xx}\pds{x}u_x^0
=  0,
\\
& \pds{t}(h\sigma_{zz}) + \pds{x}(h\sigma_{zz}u_x^0)+2h\sigma_{zz}\pds{x}u_x^0
=  0.
\end{aligned}
\right.
\end{equation}
With respect to the previous sections, the term $-gh\pds{x}b$ has been put back,
where the topography is a given function $b(x)$. For \eqref{eq:systemtopo}, the energy inequality
\eqref{eq:estimate2} is modified only by the fact that there is no right-hand side.
Thus it can be written
\begin{equation}
	\pds{t}\widetilde E+\pds{x}\widetilde G\leq 0,
	\label{eq:enerb}
\end{equation}
with
\begin{equation}
	\widetilde E(q,b)=E(q)+ghb,\qquad \widetilde G(q,b)=G(q)+ghbu_x^0,
	\label{eq:tildeEG}
\end{equation}
where $E$ and $G$ are given by \eqref{eq:Ee}, \eqref{eq:defe}, \eqref{eq:G}.
Recall that the steady states at rest of Remark \ref{rem:steady} are defined by
\begin{equation}
	u_x^0=0,\quad h+b=cst,\quad\sigma_{xx}=\sigma_{zz}=1.
	\label{eq:rest}
\end{equation}
Our scheme for \eqref{eq:systemtopo} is written as
\begin{equation}
	q_i^{n+1}=q_i^n-\frac{\Delta t}{\Delta x_i}\left(F_l(q_i^n,q_{i+1}^n,\Delta b_{i+1/2})-F_r(q_{i-1}^n,q_i^n,\Delta b_{i-1/2})\right),
	\label{eq:updatediscrhr}
\end{equation}
where as before $q=(h,hu_x^0,h\sigma_{xx},h\sigma_{zz})$, and $\Delta b_{i+1/2}=b_{i+1}-b_i$.
Thus we need to define the left and right numerical fluxes
$F_l(q_l,q_r,\Delta b)$, $F_r(q_l,q_r,\Delta b)$, for all left and right values
$q_l$, $q_r$, $b_l$, $b_r$, with $\Delta b=b_r-b_l$.
We use the hydrostatic reconstruction method of \cite{audusse-bouchut-bristeau-klein-perthame-2004} (see also \cite{bouchut-morales-2010}),
and define
\begin{equation}
	h_l^\sharp=\bigl(h_l-(\Delta b)_+\bigr)_+,\qquad h_r^\sharp=\bigl(h_r-(-\Delta b)_+\bigr)_+,
	\label{eq:rech}
\end{equation}
\begin{equation}
	q_l^\sharp=\Bigl(h_l^\sharp,h_l^\sharp u_{x,l}^0,h_l^\sharp\sigma_{xx,l},h_l^\sharp\sigma_{zz,l}\Bigr),\qquad
	q_r^\sharp=\Bigl(h_r^\sharp,h_r^\sharp u_{x,r}^0,h_r^\sharp\sigma_{xx,r},h_r^\sharp\sigma_{zz,r}\Bigr),
	\label{eq:qlrsharp}
\end{equation}
with the notation $x_+\equiv\max(0,x)$. Note that we use the notation $\sharp$ instead
of $*$ in order to avoid confusions with the intermediate states of the Riemann solver
of the previous sections. Then the numerical fuxes are defined by
\begin{equation}\begin{array}{c}
	\dsp F_l(q_l,q_r,\Delta b)=\F_l(q_l^\sharp,q_r^\sharp)+\biggl(0,g\frac{h_l^2}{2}-g\frac{h_l^{\sharp 2}}{2},0,0\biggr),\\
	\dsp F_r(q_l,q_r,\Delta b)=\F_r(q_l^\sharp,q_r^\sharp)+\biggl(0,g\frac{h_r^2}{2}-g\frac{h_r^{\sharp 2}}{2},0,0\biggr),
	\label{eq:Flr}
	\end{array}
\end{equation}
where $\F_l$ and $\F_r$ are the numerical fluxes \eqref{eq:lrfluxes} of the problem without topography.
\begin{theorem} The scheme \eqref{eq:updatediscrhr}
with the numerical fluxes $F_l$, $F_r$ defined by \eqref{eq:Flr}, \eqref{eq:rech}, \eqref{eq:qlrsharp}
satisfies the following properties.

\noindent (i) It is consistent with \eqref{eq:systemtopo} for smooth solutions,

\noindent (ii) It keeps the positivity of $h$, $\sigma_{xx}$, $\sigma_{zz}$ under the CFL condition
$\Delta t A(q_l^\sharp,q_r^\sharp)\leq\frac{1}{2}\min(\Delta x_l,\Delta x_r)$ with
$A$ defined by \eqref{eq:maxspeed},

\noindent (iii) It is conservative in the variable $h$,

\noindent (iv) It satisfies a semi-discrete energy inequality associated to \eqref{eq:enerb},

\noindent (v) It is well-balanced, i.e. preserves the steady states at rest \eqref{eq:rest}.
\label{theorem:wbscheme}
\end{theorem}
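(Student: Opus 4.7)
The proof will proceed property by property, leveraging Theorem~\ref{theorem:scheme} for the no-topography scheme applied to the reconstructed states $q_l^\sharp, q_r^\sharp$, and mimicking the strategy of~\cite{audusse-bouchut-bristeau-klein-perthame-2004,bouchut-morales-2010}.

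For well-balancedness (v), I will check that at a steady state at rest, $u_{x,i}^0=0$, $h_i+b_i=H$ constant, $\sigma_{xx,i}=\sigma_{zz,i}=1$, the reconstruction~\eqref{eq:rech} yields $h_l^\sharp=h_r^\sharp=\min(h_l,h_r)$ (one checks $h_l-(\Delta b)_+ = h_l - \max(0,b_r-b_l) = \min(h_l, h_l+b_l-b_r) = \min(h_l,h_r)$, and symmetrically). Consequently $q_l^\sharp$ and $q_r^\sharp$ carry the same $h^\sharp$, the same zero velocity, and the same unit stress values, so $\F_l(q_l^\sharp,q_r^\sharp)=\F_r(q_l^\sharp,q_r^\sharp)=(0,gh^{\sharp 2}/2,0,0)$ from property (vi) and the explicit form of the flux on a trivial Riemann problem. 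The topography correction $g h_l^2/2 - g h_l^{\sharp 2}/2$ then produces $F_l-F_r=0$ in the momentum component up to the balance with the discrete source, exactly as in the classical shallow-water setting. Conservativity (iii) and the minor (i) consistency claim are then immediate from the fact that the added correction has a zero first component, and that $q_l^\sharp,q_r^\sharp\to q$, $h_l^\sharp,h_r^\sharp\to h$ when the data tend to a common smooth value, while the correction $g(h_l^2-h_l^{\sharp 2})/2 \approx gh\,\Delta b$ reproduces $-gh\,\pds{x}b$ to leading order.

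Positivity (ii) is inherited: by construction $h_l^\sharp,h_r^\sharp\ge 0$, and the reconstructed states have the same $\sigma_{xx},\sigma_{zz}\ge 0$. Applying Theorem~\ref{theorem:scheme}(ii) to the pair $(q_l^\sharp,q_r^\sharp)$ under the stated CFL (using $A(q_l^\sharp,q_r^\sharp)$), one obtains that the update $q_i^{n+1}$ computed from $\F_l,\F_r$ of the reconstructed Riemann problem lies in $\mathcal{U}$. Finally, since the topography correction affects only the momentum component, $h^{n+1}_i\ge 0$ and the stress positivity are preserved; for the $hu^0_x$ update, one uses that the correction only modifies the momentum (not $h$ nor the stresses) so the invariant domain for the other variables is untouched.

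The heart of the proof is the semi-discrete energy inequality (iv). The idea, following~\cite{audusse-bouchut-bristeau-klein-perthame-2004,bouchut-morales-2010}, is to combine the discrete energy inequality~\eqref{eq:discenineq} satisfied by the reconstructed scheme with a ``reconstruction'' inequality that controls the change of the potential energy $ghb$ under the map $q\mapsto q^\sharp$. Concretely, one shows that for the pair $(q_l^\sharp,q_r^\sharp)$ the flux $\widetilde\G$ associated with the full energy $\widetilde E=E+ghb$ can be written as $\widetilde\G_l(q_l,q_r,\Delta b) = \G_l(q_l^\sharp,q_r^\sharp) + gh_l^\sharp b_l u_{x,l}^0 + \text{(pressure correction)}$, and similarly on the right, such that $\widetilde\G_r-\widetilde\G_l\le 0$ holds; the key algebraic identity is the telescoping $g h_l^2/2 - g h_l^{\sharp 2}/2 = g(h_l-h_l^\sharp)(h_l+h_l^\sharp)/2$ together with $h_l-h_l^\sharp \le (\Delta b)_+$, which lets one absorb the topography pressure correction into the potential term $ghb$ at the semi-discrete level (taking $\Delta t\to 0$ to avoid the quadratic fluctuation in $\Delta t$). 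I expect this step to be the main obstacle, since it requires reproducing the classical hydrostatic reconstruction energy estimate in the present setting where the pressure $P$ depends nontrivially on $\bs$; the crucial check is that the non-hydrostatic part $\frac{\eta_p}{2\lambda}h(\sigma_{zz}-\sigma_{xx})$ of $P$ does not spoil the estimate, which follows because the reconstruction leaves $\sigma_{xx},\sigma_{zz}$ unchanged and the pressure correction in~\eqref{eq:Flr} is purely gravitational. Once this is in place, passing to the semi-discrete limit yields~\eqref{eq:enerb} at the discrete level.
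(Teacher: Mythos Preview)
Your treatment of (i)--(iii) and (v) matches the paper's essentially verbatim: the paper also omits (i)--(iii) by referring to \cite{bouchut-2004}, and for (v) observes that at rest $q_l^\sharp=q_r^\sharp$, so the homogeneous flux reduces to $(0,gh^{\sharp 2}/2,0,0)$ and the gravitational correction restores $F(q_l)$, $F(q_r)$.

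For (iv) your structural insight is right---the estimate goes through precisely because the reconstruction leaves $\sigma_{xx},\sigma_{zz}$ unchanged and the pressure correction in \eqref{eq:Flr} is purely gravitational---but your sketched mechanism (telescoping $gh_l^2/2-gh_l^{\sharp 2}/2$ and bounding $h_l-h_l^\sharp$) is not how the paper closes the argument, and your proposed form $\widetilde\G_l=\G_l(q_l^\sharp,q_r^\sharp)+gh_l^\sharp b_l u_{x,l}^0+\cdots$ is not the paper's choice. The paper works with the \emph{semi-discrete} characterization
\[
\widetilde G(q_r,b_r)+\widetilde E'(q_r,b_r)(F_r-F(q_r))\le\widetilde\G\le\widetilde G(q_l,b_l)+\widetilde E'(q_l,b_l)(F_l-F(q_l)),
\]
and takes $\widetilde\G=\G(q_l^\sharp,q_r^\sharp)+\F^h(q_l^\sharp,q_r^\sharp)\,g\,b^\sharp$ with $b^\sharp=\max(b_l,b_r)$ (the numerical mass flux and $b^\sharp$, not $h_l^\sharp u_{x,l}^0$ and $b_l$). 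The reduction then rests on two exact identities: first $G(q)-E'(q)F(q)=-g\frac{h^2}{2}u_x^0$, which collapses the comparison between $q$ and $q^\sharp$ to gravitational terms; second $E'(q)-E'(q^\sharp)=(g(h-h^\sharp),0,0,0)$, which is exactly your observation that only $h$ is reconstructed, reformulated at the level of the entropy variables. After these cancellations the desired inequalities become
\[
\bigl(h_r-h_r^\sharp-(-\Delta b)_+\bigr)\F^h(q_l^\sharp,q_r^\sharp)\le 0,\qquad
\bigl(h_l-h_l^\sharp-(\Delta b)_+\bigr)\F^h(q_l^\sharp,q_r^\sharp)\ge 0,
\]
and each is either an identity (when the outer $(\cdot)_+$ in \eqref{eq:rech} is inactive) or follows from the sign of $\F^h$ at vacuum ($\F^h(0,q_r^\sharp)\le 0$, $\F^h(q_l^\sharp,0)\ge 0$). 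So the missing ingredients in your sketch are the specific numerical entropy flux with $b^\sharp=\max(b_l,b_r)$, the identity $G-E'F=-gh^2u_x^0/2$, and the closing sign argument on $\F^h$; your telescoping bound alone does not substitute for these.
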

\begin{proof} We ommit the proof of the points (i) to (iii), which follow the proof of Proposition 4.14 in \cite{bouchut-2004}.

\noindent For the proof of (v), consider data $q_l$, $q_r$, $b_l$, $b_r$ at rest, i.e.
satisfying $u_{x,l}^0=u_{x,r}^0=0$, $h_l+b_l=h_r+b_r$, $\sigma_{xx,l}=\sigma_{xx,r}=\sigma_{zz,l}=\sigma_{zz,r}=1$.
Then from \eqref{eq:rech}, \eqref{eq:qlrsharp} we get $q_l^\sharp=q_r^\sharp$, the common value $q^\sharp$ being $q_r$ if $\Delta b\geq 0$,
or $q_l$ if $\Delta b\leq 0$.  We observe that then
$\F_l(q_l^\sharp,q_r^\sharp)=\F_r(q_l^\sharp,q_r^\sharp)=F(q^\sharp)$ with $F$ given in \eqref{eq:pseudoconsflux},
and that indeed $F(q^\sharp)=(0,gh^{\sharp 2}/2,0,0)$.
The formulas \eqref{eq:Flr} yield $F_l=(0,gh_l^2/2,0,0)=F(q_l)$, $F_r=(0,gh_r^2/2,0,0)=F(q_r)$.
If this is true at all interfaces, \eqref{eq:updatediscrhr} gives $q_i^{n+1}=q_i^n$, which proves the claim.

Let us finally prove (iv). First, the scheme without topography satisfies the discrete energy
inequality \eqref{eq:discenineq}. According to \cite{bouchut-2004} section 2.2.2, it implies
the semi-discrete energy inequality, characterized by
\begin{equation}\begin{array}{c}
	\dsp G(q_r)+E'(q_r)(\F_r(q_l,q_r)-F(q_r))\leq\G(q_l,q_r),\\
	\dsp \G(q_l,q_r)\leq G(q_l)+E'(q_l)(\F_l(q_l,q_r)-F(q_l)),
	\label{eq:semihom}
	\end{array}
\end{equation}
for all values of $q_l$, $q_r$, and where $E'$ is the derivative of $E$ with respect to $q$.
Then, for the scheme with topography, the characterization of the semi-discrete
energy inequality writes
\begin{equation}\begin{array}{c}
	\dsp \widetilde G(q_r,b_r)+\widetilde E'(q_r,b_r)(F_r-F(q_r))\leq \widetilde \G(q_l,q_r,b_l,b_r),\\
	\dsp \widetilde \G(q_l,q_r,b_l,b_r)\leq\widetilde G(q_l,b_l)+\widetilde E'(q_l,b_l)(F_l-F(q_l)),
	\label{eq:semiwb}
	\end{array}
\end{equation}
where $\widetilde E$ and $\widetilde G$ are defined by \eqref{eq:tildeEG},
$\widetilde E'$ denotes the derivative of $\widetilde E$ with respect to $q$,
and $\widetilde \G$ is an unknown consistent numerical entropy flux.
Let us choose
\begin{equation}
	\widetilde \G(q_l,q_r,b_l,b_r)=\G(q_l^\sharp,q_r^\sharp)+\F^h(q_l^\sharp,q_r^\sharp)gb^\sharp,
	\label{eq:numentrfluxwb}
\end{equation}
where $\F^h$ is the common $h-$component of $\F_l$ and $\F_r$, and for some $b^\sharp$ that is defined below.
Then, noticing that $\widetilde E'(q,b)=E'(q)+gb(1,0,0,0)$, we can write
the desired inequalities \eqref{eq:semiwb} as
\begin{equation}\begin{array}{c}
	\dsp G(q_r)+E'(q_r)(F_r-F(q_r))+\F^h(q_l^\sharp,q_r^\sharp)gb_r
	\leq\G(q_l^\sharp,q_r^\sharp)+\F^h(q_l^\sharp,q_r^\sharp)gb^\sharp,\\
	\dsp \G(q_l^\sharp,q_r^\sharp)+\F^h(q_l^\sharp,q_r^\sharp)gb^\sharp
	\leq G(q_l)+E'(q_l)(F_l-F(q_l))+\F^h(q_l^\sharp,q_r^\sharp)gb_l.
	\label{eq:condsentr1}
	\end{array}
\end{equation}
But using \eqref{eq:semihom} evaluated at $q_l^\sharp$, $q_r^\sharp$ and comparing the result
with \eqref{eq:condsentr1}, we get the sufficient conditions
\begin{equation}\begin{array}{c}
	\dsp G(q_r)+E'(q_r)(F_r-F(q_r))+\F^h(q_l^\sharp,q_r^\sharp)gb_r
	\leq G(q_r^\sharp)+E'(q_r^\sharp)(\F_r(q_l^\sharp,q_r^\sharp)-F(q_r^\sharp))+\F^h(q_l^\sharp,q_r^\sharp)gb^\sharp,\\
	\dsp G(q_l^\sharp)+E'(q_l^\sharp)(\F_l(q_l^\sharp,q_r^\sharp)-F(q_l^\sharp))+\F^h(q_l^\sharp,q_r^\sharp)gb^\sharp
	\leq G(q_l)+E'(q_l)(F_l-F(q_l))+\F^h(q_l^\sharp,q_r^\sharp)gb_l.
	\label{eq:condsentr2}
	\end{array}
\end{equation}
We compute now
\begin{equation}
	E'(q)=\biggl(-\frac{(u_x^0)^2}{2}+gh-\frac{\eta_p}{4\lambda}\ln(\sigma_{xx}\sigma_{zz}),u_x^0,
	\frac{\eta_p}{4\lambda}(1-1/\sigma_{xx}),\frac{\eta_p}{4\lambda}(1-1/\sigma_{zz})\biggr),
	\label{eq:E'}
\end{equation}
and writing
\begin{equation}\begin{array}{c}
	\dsp F(q)=\biggl(hu_x^0,h(u_x^0)^2+g\frac{h^2}{2}+\frac{\eta_p}{2\lambda}h(\sigma_{zz}-\sigma_{xx}),h\sigma_{xx}u_x^0,h\sigma_{zz}u_x^0\biggr),\\
	\dsp G(q)=\biggl(h\frac{(u_x^0)^2}{2}+gh^2+\frac{\eta_p}{4\lambda}h(\sigma_{xx}+\sigma_{zz}-\ln(\sigma_{xx}\sigma_{zz})-2)
	+\frac{\eta_p}{2\lambda}h(\sigma_{zz}-\sigma_{xx})\biggr)u_x^0,
	\label{eq:FG}
	\end{array}
\end{equation}
we deduce the identity
\begin{equation}
	G(q)-E'(q)F(q)=-g\frac{h^2}{2}u_x^0.
	\label{eq:idGEF}
\end{equation}
Thus the inequality \eqref{eq:condsentr2} simplifies to
\begin{equation}\begin{array}{c}
	\dsp -g\frac{h_r^2}{2}u_{x,r}^0+E'(q_r)F_r+\F^h(q_l^\sharp,q_r^\sharp)gb_r
	\leq -g\frac{h_r^{\sharp 2}}{2}u_{x,r}^0+E'(q_r^\sharp)\F_r(q_l^\sharp,q_r^\sharp)+\F^h(q_l^\sharp,q_r^\sharp)gb^\sharp,\\
	\dsp -g\frac{h_l^{\sharp 2}}{2}u_{x,l}^0+E'(q_l^\sharp)\F_l(q_l^\sharp,q_r^\sharp)+\F^h(q_l^\sharp,q_r^\sharp)gb^\sharp
	\leq -g\frac{h_l^2}{2}u_{x,l}^0+E'(q_l)F_l+\F^h(q_l^\sharp,q_r^\sharp)gb_l.
	\label{eq:condsentr3}
	\end{array}
\end{equation}
Now, using \eqref{eq:Flr} and the fact that $E'(q_r)-E'(q_r^\sharp)=\bigl(g(h_r-h_r^\sharp),0,0,0\bigr)$,
$E'(q_l)-E'(q_l^\sharp)=\bigl(g(h_l-h_l^\sharp),0,0,0\bigr)$, the desired inequalities \eqref{eq:condsentr3}
rewrite
\begin{equation}\begin{array}{c}
	\dsp g\bigl(h_r-h_r^\sharp+b_r-b^\sharp\bigr)\F^h(q_l^\sharp,q_r^\sharp)\leq 0,\\
	\dsp g\bigl(h_l-h_l^\sharp-b^\sharp+b_l\bigr)\F^h(q_l^\sharp,q_r^\sharp)\geq 0.
	\label{eq:condsentr4}
	\end{array}
\end{equation}
We choose now $b^\sharp=\max(b_l,b_r)$, so that \eqref{eq:condsentr4} can be put in the form
\begin{equation}\begin{array}{c}
	\dsp \bigl(h_r-h_r^\sharp-(-\Delta b)_+\bigr)\F^h(q_l^\sharp,q_r^\sharp)\leq 0,\\
	\dsp \bigl(h_l-h_l^\sharp-(\Delta b)_+\bigr)\F^h(q_l^\sharp,q_r^\sharp)\geq 0.
	\label{eq:condsentr5}
	\end{array}
\end{equation}
Finally, taking into account \eqref{eq:rech}, we observe that if $h_l-(\Delta b)_+\geq 0$
then the second line of \eqref{eq:condsentr5} is an identity, otherwise $h_l^\sharp=0$ and
the the second inequality of \eqref{eq:condsentr5} holds because $\F^h(0,q_r^\sharp)\leq 0$
by the $h-$nonnegativity of the numerical flux. The same argument is valid for the first inequality
of \eqref{eq:condsentr5}, which concludes the proof.
\end{proof}
\begin{remark} The maximum principle property on $s_{xx}$ and minimum principle property
on $s_{zz}$, that hold for the solver without topography, are not valid for the above solver
with topography, even if it should hold at the continuous level.
\label{Remark maxprinchc}
\end{remark}

\subsection{Numerical results}

We now illustrate our model by numerical simulations performed with the scheme described above. 
Note that the model can be considered independently of its derivation and we explore numerical values beyond the physical regime of the Section~\ref{sec:derivation}.
We denote by $H(x)$ the Heaviside function with jump $+1$ at $x=0$.
For all numerical simulations, we chose Neumann conditions at boundary interfaces.

{\bf Test case 1.} It is a Riemann problem with initial condition
$(h,hu^0_x,h\sigma_{xx},h\sigma_{zz})(t=0)=(3-2H(x))(1,0,1,1)$,
without source term ($b\equiv0$), that can be interpreted as a ``dam'' break on a wet floor, 
with polymeric fluid initially at rest everywhere.
We first fix $\eta_p=\lambda=1$ and study the convergence of our scheme with respect to the spatial
discretization parameter for $50,100,200$ and $400$ points and a constant $CFL=1/2$.
The results at final time $T=.2$ are shown in Fig.~\ref{fig:test1convergence}.

\begin{figure}
 \includegraphics[scale=.7]{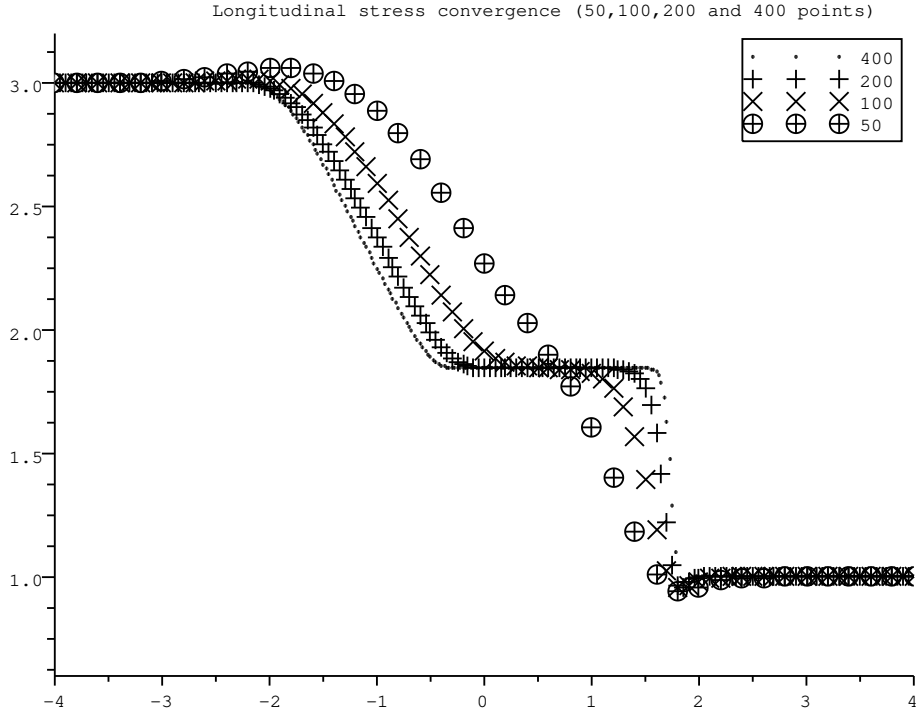}
\caption{\label{fig:test1convergence} Convergence of the discretized variables $h\sigma_{xx}$ in Test case 1}
\end{figure}

Then, using $400$ points and a constant $CFL=1/2$, we let $\eta_p$ vary as $\lambda=1$ is fixed.
The results are shown in Fig.~\ref{fig:test1variationseta1} and~\ref{fig:test1variationseta2}.
On the one hand, the limit case $\eta_p\to0$ coincides with the usual shallow-water model,
since then the pressure assumes the same values as in a relaxation scheme for the Saint-Venant equations
(independent of $h\sigma_{xx}$ and $h\sigma_{zz}$) while
$s_{xx},s_{zz}$ become passive tracers and their evolution is only one-way coupled
-- in fact enslaved -- to the autonomous dynamics of the Saint-Venant system of equations.
On the other hand, the case $\eta_p\to\infty$ is some kind of ``rigid limit''.
As expected from the formulae~\eqref{eq:eigenvalues} for the eigenvalues of the Jacobian matrix,
the left-going rarefaction wave and the right-going shock wave are all the faster as the viscosity $\eta_p$ increases
(we hardly see them at $T=.2$ for $\eta_p=10^{+3}$ in Fig.~\ref{fig:test1variationseta1} and~\ref{fig:test1variationseta2}).
This is consistent with the physical notion of rigid limit (sound waves are faster in solids than in liquids).
On the contrary, the intermediate wave (a right-going contact discontinuity) is all the {\it slower} as $\eta_p$ increases,
and the jump of $h$ across it is all the larger. This was not obvious to us at first.
It could be explained for instance by the fact that $c_l+c_r$ 
(at the denominator of the formulae for $u^0_{x,\star}$ and the two intermediate states $1/h^\star_l,1/h^\star_r$ in~\eqref{eq:l*r*}) 
becomes very large when $\eta_p$ increases.
Notice that the jumps for $\sigma_{xx}$ and $\sigma_{zz}$ are directly related to that for $h$ through~\eqref{eq:sigm*} 
and are consistently small when that for $h$ is large.
In any case, the materials on the left of the intermediate wave (where $h$ is higher) is stretched in direction $\be_x$ and compressed in direction $\be_z$,
while it is stretched in direction $\be_z$ and compressed in direction $\be_x$ on the right,
which we can interpret as the manifestation of a close-to-equilibrium stability property (see Section~\ref{sec:conclusion}).

\begin{figure}
 \include{ucm0159IHeta1ep0} 
 \include{ucm0159IUeta1ep0} 
\caption{\label{fig:test1variationseta1} Variations of the variables $h,u^0_{x}$ with $\eta_p$ in Test case 1}
\end{figure}

\begin{figure}
 \include{ucm0159IXeta1ep0} 
 \include{ucm0159IZeta1ep0}  
\caption{\label{fig:test1variationseta2} Variations of the variables $\sigma_{xx},\sigma_{zz}$ with $\eta_p$ in Test case 1}
\end{figure}

We also let $\lambda$ vary as $\eta=1$ is fixed. 
The results (still at same given time $T=.2$) are shown in Fig.~\ref{fig:test1variationslambda1} and~\ref{fig:test1variationslambda2}.
One clearly sees here the competition between transport and diffusion of viscoelastic effects (the viscoelastic energy is stored in the new variables, that are transported but also diffused).
Indeed, the viscoelastic energy is dissipated all the more rapidly as the relaxation time $\lambda$ is small (the viscous, ``Low-Weissenberg'' limit).
And the waves are all the more smoothed as $\lambda$ is small (the source terms, which act as diffusive terms, are all the more important).
On the other hand, the jump across the contact discontinuity is all the {\sl smaller} for $\sigma_{xx}$ and $\sigma_{zz}$ as $\lambda$ is small,
but all the {\sl larger} for $h$. This is coherent with a reasoning similar to the one above when $\eta_p$ only was varied:
a smaller relaxation time $\lambda$ implies slightly faster rarefaction and shock waves because of~\eqref{eq:eigenvalues}, 
and a larger coefficient $c_l+c_r$ in the formulae~\eqref{eq:l*r*}. 

\begin{figure}
 \include{ucm0159IH1ep0lambda} 
 \include{ucm0159IU1ep0lambda} 
\caption{\label{fig:test1variationslambda1} Variations of the variables $h,u^0_{x}$ with $\lambda$ in Test case 1}
\end{figure}

\begin{figure}
 \include{ucm0159IX1ep0lambda} 
 \include{ucm0159IZ1ep0lambda}  
\caption{\label{fig:test1variationslambda2} Variations of the variables $\sigma_{xx},\sigma_{zz}$ with $\lambda$ in Test case 1}
\end{figure}

{\bf Test case 2.} It is a Riemann problem again without source term $b\equiv0$ but {\it with vacuum} in the initial condition $(h,hu^0_x,h\sigma_{xx},h\sigma_{zz})(t=0)=(3-3H(x))(1,0,1,1)$, 
which can be interpreted as a ``dam'' break on a dry floor.
The results in Fig.~\ref{fig:test2rest1} and Fig.~\ref{fig:test2rest2} at $T=.5$ show again that 
small $\lambda$ and large $\eta_p$ imply a fast right-going rarefaction wave and a slow contact discontinuity, 
with a large jump for $h$ and small jumps for $\sigma_{xx},\sigma_{zz}$ at contact discontinuity.
On the contrary, large $\lambda$ and small $\eta_p$ imply a slow right-going rarefaction wave and a fast contact discontinuity, with a small jump for $h$ and large jumps for $\sigma_{xx},\sigma_{zz}$ at contact discontinuity. 
We nevertheless note that the ``High-Weissenberg'' limit $\lambda\to+\infty$ (recall Remark~\ref{rem:limit}) 
is more difficult. 
In particular, although $\sigma_{zz}$ remains bounded, $\sigma_{xx}$ seems to become unbounded close to the wet/dry front.
Recalling our Remark~\ref{Remark maxprinc} (see also the comments below about the occurence of vacuum in Test 3), this is not surprising:
there hold minimum principles $\sigma_{xx}\ge (kh)^{-2},\sigma_{zz}\ge (kh)^2$ 
in the High-Weissenberg limit $\lambda\to\infty$, $\eta_p/\lambda=O(1)$, -- without topography -- where the relaxation source terms are neglected
(except that here initially $k=\infty$, but $k$ becomes hopefully finite after some time).
Hence a possible blow-up at the front, where $h\to0$, when $\lambda$ is too large for the diffusive relaxation source terms to compensate for the transport effects.
Note that in any case, the materials on the left of the contact-discontinuity wave (where $h$ is non-zero) is stretched in direction $\be_x$ and compressed in direction $\be_z$, 
except at the vacuum front where a small region with $\strs$ closer to the equilibrium $\I$ is constantly seen (may be artificially due to our numerical treatment of the vacuum front).
While no actual blow-up occurs, this will also be interpreted as a manifestation of a close-to-equilibrium stability property (see Section~\ref{sec:conclusion}).


\begin{figure}
 \include{ucm04590IH} 
 \include{ucm04590IU} 
\caption{\label{fig:test2rest1} Variations of the variables $h,u^0_{x}$ with $\eta_p, \lambda$ in Test case 2}
\end{figure}

\begin{figure}
 \include{ucm04590IX} 
 \include{ucm04590IZ}  
\caption{\label{fig:test2rest2} Variations of the variables $\sigma_{xx},\sigma_{zz}$ with $\eta_p, \lambda$ in Test case 2}
\end{figure}

{\bf Test case 3.} It is a benchmark introduced in~\cite{gallouet-herard-seguin-2003} to test the treatment of topography, see also~\cite{bouchut-2004}.
For $x\in(0,25)$, we compute until $T=.25$ the evolution from an initial condition
$(h,hu^0_x,h\sigma_{xx},h\sigma_{zz})(t=0,x)=((10-b)_+,-350+700H(x-50/3),(10-b)_+,(10-b)_+)$ over a topography $b(x)=H(x-25/3)-H(x-25/2)$.
Two rarefaction waves propagate on the left and right sides of the initial velocity singularity at $x=50/3$ 
so that a vacuum is created in between (in the usual Saint-Venant case).
In addition, a couple of rarefaction/shock waves is created at each singular point $x=25/3,25/2$
of the topography, but have much smaller amplitudes than the rarefaction waves at $x=50/3$.
The results in Fig.~\ref{fig:test3a1} and~\ref{fig:test3a2} are obtained for various  $\eta_p, \lambda$ at a constant $\eta_p/\lambda=10^{-4}$.
This particular choice was made because then, at the final time, the system is sufficiently close to the Saint-Venant limit 
$\eta_p/\lambda\to0$ so that the 
pressure is hardly modified compared with the usual Saint-Venant case.

Compared with the usual Saint-Venant case, the double rarefaction wave centered at $x=50/3$
cannot create vacuum but at the single point $x=50/3$ where the initial velocity is singular.
This can be explained as follows.
Assuming that the source terms in the stress equations do not influence much the bounds on $\sigma_{xx},\sigma_{zz}$,
in agreement with our Remark~\ref{Remark maxprinc}, the maximum (resp. minimum) 
principle holds for $s_{xx}$ (resp. $s_{zz}$), and there exists
a constant $k$ (depending only on the initial conditions since initially $h>0$) such that $\sigma_{xx}\ge (kh)^{-2},\sigma_{zz} \ge (kh)^{+2}$.
But according to the energy bound, one has that $(\eta_p/\lambda)\int h\sigma_{xx}dx$ remains bounded.
We deduce that $(\eta_p/\lambda)\int h^{-1}dx$ remains bounded, and therefore
$h$ cannot tend to $0$ on a whole interval, but can vanish on a single point.
We have then $\sigma_{xx}\to+\infty$ at the singular point, here $x=50/3$.


Moreover, another vacuum can be created at $x=25/2$ (of course still at a single-point because of the previous reasoning).
Of course, this new phenomena could just be a numerical artifact, for instance due to the computation of source terms by the hydrostatic reconstruction method.
In any case, assuming it is part of the richer phenomenology of our new model compared with the usual Saint-Venant model 
(see also Section~\ref{sec:conclusion} for possible physical interpretations), we observe that the existence of that phenomena depends on $\eta_p,\lambda$.
It happens only for $\eta_P=10^{-4},\lambda=10^{+0}$ and $\eta_P=10^{-3},\lambda=10^{+1}$ in our numerical experiments, not for $\eta_P=10^{-5},\lambda=10^{-1}$. 
But it occurs for larger $\lambda$ at $\eta_P=10^{-5}$ (when the stress relaxation terms are less important).
Indeed, this phenomena seems triggered mainly by high values of $\lambda$ (the ``High-Weissenberg limit'', which by the way requires very small time steps because of the CFL constraint),
and still holds at high values of $\eta_p$ (a ``rigid limit'' that seems to lead to some kind of a ``break-up'' of the jet here), 
even at very large $\eta_p$ when no vacuum occurs in between the rarefaction waves.
Notice that the latter phenomenon also induces  
an additional sign change for $u^0_x$ in between the two vacuum points, whose location 
seems to depend on $\lambda$ but not on $\eta_p$,
(another indication that the role of viscoelastic dissipation is dominant here).
And compared with the usual Saint-Venant case, velocities also assume much greater value (on the left in particular).

\begin{figure}
   \include{ucm16590IHd} 
   \include{ucm16590IUd} 
\caption{\label{fig:test3a1} Variations of the variable $h+b,u^0_x$ with $\eta_p, \lambda$ in Test case 3.
We use different labels for the positive ($\oplus$) and negative ($\ominus$) part of the velocity.
}
\end{figure}

\begin{figure}
   \include{ucm16590IXd} 
   \include{ucm16590IZd} 
\caption{\label{fig:test3a2} Variations of the variable $\sigma_{xx},\sigma_{zz}$ with $\eta_p, \lambda$ in Test case 3.}
\end{figure}

{\bf Test case 4.} In our last test case, we woud like to assess the treament of another type of topography source terms,
with creation of dry/wet fronts, by the hydrostatic reconstruction.
A usual test case is Thacker's~\cite{thacker-1981} e.g., which has analytical solutions in the usual Saint-Venant case.
But we could not capture interesting phenomena with Thacker's testcase, 
in particular because the CFL constraint requires the time step to go to $0$ very quickly
(on short time ranges) due to the creation of dry fronts where $h\to0$ and $\sigma_{xx}\to+\infty$ (with $\lambda$ not too small).
Note that this does not necessarily mean that this problem does not have global solutions with finite-energy.
A time-implicit scheme (probably hard to build) might be able to compute finite-energy approximations with a non-vanishing time-step.

Here, we consider the test case proposed by Synolakis~\cite{synolakis-1987} to model the runup of solitary waves.
This could be used until interesting final times $T=32.5$ after the incidental wave has reflected against the shore and created a dry front,
see also~\cite{marche-bonneton-fabrie-seguin-2007}.
We use $(h,hu^0_x,h\sigma_{xx},h\sigma_{zz})(t=0,x)= \left((1.+h_0(x)-b(x)\right)_+(1,\sqrt{g}h_0(x),1,1)$ 
as initial condition over a topography $b(x)=((x-40.)/19.85)_+$, $x\in(0,100)$.
The pertubation $h_0(x)=\alpha(\mathop{cosh}(\sqrt{.75\alpha}(x-\mathop{acosh}(\sqrt{1/.05})/(.75\alpha))))^{-2}$
models a solitary wave as a function of the parameter with $\alpha=.019/.1$
according to Synolakis semi-analytical theory.

The results in Fig.~\ref{fig:test4a} and~\ref{fig:test4b} show that 
it is essentially the variations of $\eta_p/\lambda$ that influence the water height and velocity 
among all possible variations of $\eta_p,\lambda$.
And although the first effect of the variations of $\eta_p/\lambda$ is on the waves celerity,
there is no direct match between variations in $\eta_p/\lambda$ and a time shift as shown in Fig.~\ref{fig:test4a}.
On the contrary, the variables $\sigma_{xx},\sigma_{zz}$ depend more on $\lambda$ alone 
(recall the importance of relaxation source terms, especially in the case where $h\to0$),
at least for such small values of $\eta_p/\lambda$ as those tested here
(sufficiently close to the Saint-Venant regime for the time step not to vanish, even at high values of $\lambda$).
The smaller $\lambda$ is, the stronger the dissipation is and thus balances the large stress values
that were induced close to the dry front. 

\begin{figure}
   \include{ucm13590IH} 
   \include{ucm13590IU} 
\caption{\label{fig:test4a} Variations of the variable $h+b,u^0_x$ with $\eta_p, \lambda$ in Test case 4}
\end{figure}

\begin{figure}
   \include{ucm13590IX} 
   \include{ucm13590IZ} 
\caption{\label{fig:test4b} Variations of the variable $\sigma_{xx},\sigma_{zz}$ with $\eta_p, \lambda$ in Test case 4}
\end{figure}

\section{Conclusion}
\label{sec:conclusion}

We have proposed a new reduced model for the motion of thin layers of viscoelastic fluids 
(shallow viscoelastic flows)
that are described by the upper-convected Maxwell model and driven by the gravity,
under a free surface and above a given topography with small slope 
(like in the standard Saint Venant model for shallow water).
More precisely,
we have shown formally that for given boundary conditions and under scaling assumptions (H1-5),
the solution to the incompressible Euler-UCM system of equations can be approximated by the solutions
to the reduced model~\eqref{eq:reduced} in some asymptotic regime.
We hope that this asymptotic regime in particular is physically meaningful, that and our new model makes sense, 
possibly beyond the previous asymptotic regime. (That is why we have studied it mathematically and explored it numerically
without constraining ourself to a particular regime, as it is usual in such cases.)

Observe that in the end we have obtained a flow model whose dynamics is function of the first normal stress difference only,
while the shear part of the stress is negligible and computed as an output of the flow evolution.
More specifically, the boundary conditions~(\ref{eq:notension}--\ref{eq:nofriction})
and the flat velocity profile (consequence of the assumed motion by slices)
require compatibility conditions on the bulk behaviour of $\tau_{xz}$ inside a thin layer.
Before looking in future works for other asymptotic regimes, 
possibly compatible (under different assumptions) with more general kinematics,
we would like to conclude here with a better insight of the physical implications of our reduced model.

\subsection{Physical interpretation from the macroscopic mechanical viewpoint}
\label{sec:interpretationmacro}

We note that the main differences between our model for shallow (Maxwell) viscoelastic flows
and the standard Saint Venant model for shallow water 
is
i) a new hydrostatic pressure (\ref{eq:p1}), which is function of the (viscoelastic) internal stresses in addition to the water level $h$, hence
ii) a new hydrodynamic force in the momentum balance (in addition to the external gravity force), which is proportional to 
the normal stress difference $\tau_{xx}-\tau_{zz}$, and
iii) variable internal stresses $\tau_{xx}$ and $\tau_{zz}$, which have their own dynamics
corresponding to a viscoelastic mechanical behaviour (with a finite relaxation time $\lambda=O(1)$ ;
such that one recovers the standard viscous mechanical behaviour only in the limit $\lambda\to0$).
Moreover, in the asymptotic regime where our non-Newtonian model was derived, with a small viscosity parameter $\eta_p =O(\e)$,
the strain and stress tensors have the scaling 
\begin{equation}
\label{eq:matrix-scaling}
\gbu = \begin{pmatrix}
        O(1) & O(\e) \\ O(\e) & O(1)
       \end{pmatrix},
\qquad
\str = \begin{pmatrix}
        O(\e) & O(\e^2) \\ O(\e^2) & O(\e)
       \end{pmatrix}.
\end{equation}
One essential rheological feature of our reduced model is thus the ratio $\e$ between the shear and elongational components of the stress tensor $\str$ and of the strain tensor $\gbu$.
The fact that our model should mainly describe extensional flows, with small shear (of the same small order as the elongational viscosity), 
seems to be  a strong limitation to the applicability of our model in real situations.
Of course, one is likely to need another reduced model (in other asymptotic regimes) to describe flows that are not essentially elongational.

Note yet that there are situations where physicists arrive at similar conclusions
\cite{entov-yarin-1984,entov2006dynamics} and obtain a very similar
one-dimensional model with purely elongational stresses for the description of free axisymmetric jets.
By the way, a description of free axisymmetric jets is also well achieved by our model
since the pure slip boundary conditions (\ref{eq:pureslip})-(\ref{eq:nofriction}) is equivalent to
assuming a cylindrical symmetry around the symmetry line of the jet,
and surface tension effects (neglected in our model) can be included using standard modifications
of our no-tension boundary condition (\ref{eq:notension}). 

Moreover, it seems possible to still include non-negligible shear effects in our model
through a parabolic correction of the vertical profile like in~\cite{gerbeau-perthame-2001,marche-2007},
as well as surface tension and friction effects of order one at the boundaries.

\subsection{Physical interpretation at the microscopic molecular level}
\label{sec:interpretationmicro}

A microscopic interpretation of our asymptotic regime
can also be achieved using a molecular model of the elastic effects
(that is, a model at the molecular level from which the UCM is a coarse-grained version
at the macroscopic mechanical level).
Following~\cite{bird-curtiss-armstrong-hassager-1987b},
a typical molecular model that accounts for the elasticity of a fluid 
invokes the transport of elastically deformable Brownian particles diluted in the fluid
(which can often be thought of as large massive molecules like polymers).
The simplest model of this kind couples, locally in the physical space, a kinetic theory for ``dumbbells''
(two point-masses connected by an elastic force idealized as a ``spring'') with the strain of the fluid.

Let us denote $\X_t(\x)$ the connector vector between the two point-masses
of a dumbbell modelling a polymer molecule at position $\x$ and time $t$ in the fluid.
The collection of vector stochastic processes $(\X_t(\x))_{t\in(0,+\infty)}$ 
parametrized by $\x\in\D_t$ is solution to overdamped Langevin equations
\begin{equation}
\label{eq:ito}
d \X_t  + (\bu\cdot\grad) \X_t dt
= \left( (\gbu) \X_t-\frac{2}{\zeta}\mathbf{F}(\X_t) \right) dt + 2\sqrt{\frac{k_B T}{\zeta}} d\mathbf{B}_t
\end{equation}
for a given field $(\mathbf{B}_t(\x))_{t\in(0,+\infty)}$ of standard Brownian motions (in It\^o sense) where
$\zeta$ is a friction parameter, $k_B$ the Boltzmann constant and $T$ the absolute thermodynamical temperature.
The UCM equations can be exactly recovered with the specific choice $\mathbf{F}(\X)=H\X$. 
Indeed, the extra-stress $\str$ and the conformation tensor $\strs$ 
are given by Kramers relation
\begin{equation}
\label{eq:stress}
\str=\frac{\eta_p}{2\lambda}(m\:\strs-\I)
\quad \text{ with } \quad 
\strs(t,\x)=\frac1H\mathbb{E}\Brk{\tilde\X_t\otimes\mathbf{F}(\tilde\X_t)}=\int\Brk{\tilde\X\otimes\tilde\X}\psi(t,\x,\ell\tilde\X)d\tilde\X
\end{equation}
where $\tilde\X=\X/\ell$ is an adimensional version of $\X$, $m=\frac{H\ell^2}{k_B T}$ is a ratio between the elastic potential energy and the heat of the Brownian bath,
and $\eta_p=2 \lambda n k_B T$ is the molecular interpretation of the polymer viscosity,
with $n$ the number density 
of polymer chains by unit volume (assumed constant as usual for dilute polymer solutions) and $\lambda$ a characteristic time for dumbbells.
One can always choose $\ell$ such that $m=1$.
Then, choosing $\lambda=\frac{\zeta}{4H}$ as a relaxation time, It\^o formula allows one to exactly recover the UCM system of equations~(\ref{eq:ucm}) 
when the solvent is assumed inviscid with a velocity field $\bu(t,\x)$ solution to the Euler equations, 
on noting that the probability density $\psi(t,\x,\ell\tilde\X)$ satisfies the following Fokker-Planck equation
on the unbounded domain $\tilde\X\in\R^2$:
\begin{equation}
\label{eq:fokker-planck}
\pd{\psi}{t} + \bu\cdot\grad\psi =
-\div_{\tilde\X}\brk{[(\gbu)\tilde\X-\frac{1}{2\lambda}\tilde\X]\psi} 
+ \frac{1}{2\lambda}\Delta_{\tilde\X}\psi \,. 
\end{equation}

We note that in~\cite{narbona-reina-bresch-2010} a reduced model for shallow viscoelastic flows
quite similar to ours has already been derived starting from a coupled micro-macro system like
(\ref{eq:fokker-planck}--\ref{eq:stress}--\ref{eq:ns}), rather than starting from a coarse-grained
system at the macroscopic level like the UCM model.
The difference between the Hookean micro-macro system above (equivalent in some sense to the UCM model)
and the micro-macro system used in~\cite{narbona-reina-bresch-2010} is the spring force:
it corresponds to \textit{FENE} dumbbells $\mathbf{F}(\X_t)=\X_t/(1-|\X_t|^2/b)$ in~\cite{narbona-reina-bresch-2010}.
The FENE force is more physical because it accounts for a finite extension $|\X_t|< b$, but contrary to the Hookean dumbbells,
it does not have an exact coarse-grained macroscopic equivalent like the UCM model.
Yet, if we follow the same procedure as in~\cite{narbona-reina-bresch-2010} but for Hookean dumbbells,
we can hope to derive a reduced micro-macro model whose coarse-grained version is comparable to our new reduced UCM model.
Moreover, if the scaling regimes are the same as in~\cite{narbona-reina-bresch-2010}, then
our model should also compare to that in~\cite{narbona-reina-bresch-2010},
for an inviscid solvent, in the infinite extensibility limit $b\to\infty$ 
(where one formally recovers the Hookean dumbbells from FENE dumbbells).
Now, observe that the scaling of our new model implies~\eqref{eq:matrix-scaling}
$\gbu=\boldsymbol{\gamma}_0 + O(\e)$ where $\boldsymbol{\gamma}_0 = O(1)$ is 
a traceless diagonal matrix with entries $\pds{x}u^0_x,-\pds{x}u^0_x$.
Then~\eqref{eq:fokker-planck} rewrites 
\begin{equation}
\label{eq:fokker-planckbis}
\pd{\psi}{t} + \bu\cdot\grad\psi =
\frac{1}{2\lambda} \div_{\X}\brk{M\grad_{\X}\brk{\frac\psi{M}}} + O(\e) \,,
\end{equation}
where $M(t,\x,\X)$ is a weight function proportional to the Maxwellian $e^{-\X^T(2\lambda\boldsymbol{\gamma}_0-\I)\X}$.
The approximation~\eqref{eq:fokker-planckbis} of~\eqref{eq:fokker-planck} is consistent with our new reduced model
provided it yields a consistent approximation for the stress in~\eqref{eq:stress}:
that is, it suffices to show $\sigma_{xx},\sigma_{zz}=O(1)$ and $\sigma_{xz}=O(\e)$ as $\e\to0$.
To this aim, let us define an order-one approximation $\psi^0=\psi+O(\e)$ solution to
\begin{equation}
\label{eq:fokker-planck0}
\pd{\psi^0}{t} + \bu^0\cdot\grad\psi^0 =
\frac{1}{2\lambda} \div_{\X}\brk{M\grad_{\X}\brk{\frac{\psi^0}{M}}} \,.
\end{equation}
The point is to estimate the terms
\begin{equation}
\label{eq:stress0}
\str^0=\frac{\eta_p}{2\lambda}(\strs^0-\I), 
\qquad \strs^0
=\int\Brk{\X\otimes\X}\psi^0(\X)d\X \,.
\end{equation}
This is not an easy task because of the coupling between $\psi^0$ and $\bu^0$. 
Yet it seems reasonable to assume that $\psi^0$ remains close to the equilibrium solution $M/\int M$ for all times
(indeed, the Hookean force is derived from an $\alpha$-convex potential~\cite{arnold-et-al-2004}),
and in particular the Maxwellian $\psi^0 \propto e^{- (Ax^2+Bz^2+2Cxz)}$ has the scaling $A=O(1),B=O(1),C=O(\e)$,
which implies that $\strs$ (and thus $\str$) is diagonal at first order.
One then obtains a reduced kinetic model which can be exactly coarse-grained into our new reduced UCM model with It\^o formula.

A macroscopic consequence of the microscopic assumption above is that the reduced model is well-adapted for elongational flows, which is consistent with our macroscopic intepretation of the model. 
Indeed, everywhere in the macroscopic physical space, one can only expect a balance of internal elastic energy due to stretching or compressing strains in the directions $\be_x$ and $\be_z$,
which is the case in elongational flows. (There is not a high probability of permanently sheared dumbbells.)
Moreover, if $\psi^0$ is actually close to the equilibrium $M$ (the particular case $A\approx2\lambda\pds{x}u^0_x-1,B\approx-2\lambda\pds{x}u^0_x-1,C\approx0$ of our assumption), then, at first-order, the dumbbells are quite uniformly oriented but stretched in one canonical direction -- $\be_x$ or $\be_z$ -- and necessarily compressed in the orthogonal one (the level-sets of the distribution function are ellipsoidal with principal axes $\be_x$ and $\be_z$ at first order).
This was indeed observed in those numerical experiments where no blow-up phenomenon seemed to occur.

The microscopic view is in turn a plausible physical explanation at the molecular level of some macroscopic observations.
Recall indeed that one-dimensional simple models similar to our model have already been derived in the past to model axisymmetric free jets of elastic liquids~\cite{entov-yarin-1984,entov2006dynamics} with a view to explaining the die swell at the end of an extrusion pipe.
Now, a miscroscopic interpretation of the die swell is: the elastic energy stored before the die 
is released after the die. The dumbbells, mainly compressed in the radial direction $\be_z$ before the die, stretch 
just after the die. This may be responsible for an increase of the jet radius (the free-surface of the jet flow equilibrates with the atmospheric pressure) after a characteristic relaxation time linked to $\lambda$, hence the so-called {\it delayed die swell}.

Finally, we would like to comment on the results obtained in~\cite{narbona-reina-bresch-2010} 
with FENE dumbbells. 
The main differences with our reduced model (which has the micro-macro interpretation explicited above) are:
(i) 
the relaxation time in~\cite{narbona-reina-bresch-2010} is assumed small $\lambda=O(\e)$, because then it is possible to compute approximate solutions to the Fokker-Planck equation
following the Chapman-Enskog procedure of~\cite{degond-lemou-picasso-2002} ; and  
(ii) 
the polymer distribution 
is mainly radial ($\psi^0$ is only function of $|\X|$), 
because the authors claim that this suffices to next imply $\sigma_{xz} = O(\e)$ and, as a consequence, a flat profile for the horizontal velocity like in our model.
Then, the scaling regimes are not the same, and the radial assumption is too strong to allow one to recover our ellipsoidal probability distribution.
So we cannot directly compare our results though they have a similar flavour.

\subsection{Open questions and perspectives}

First, regarding the interpretation of our model, one might ask
whether the present scaling corresponds to a physical situation actually observed for elastic fluids in nature.
In particular, the main questionable assumption is of course the pure-slip and no-friction 
boundary conditions~(\ref{eq:pureslip}--\ref{eq:nofriction}) at the bottom
(already unrealistic for Newtonian flows, maybe even more unrealistic for non-Newtonian ones).
Second, future works on this topic might consider the following directions:
\begin{itemize}
 \item derive thin-layer reduced models with other equations modelling non-Newtonian flows,
which are believed to better model the rheological properties of real materials
(constitutive models like Giesekus, PTT, FENE-P, or other molecular models 
than the FENE dumbbell model used in~\cite{chupin-2009b,narbona-reina-bresch-2010}),
and in two-dimensional settings (see~\cite{bouchut-westdickenberg-2004,marche-2007} for the standard shallow water model);
 \item derive a reduced model closer to real physical situations, possibly in different regimes, or
for instance by using a $z$-dependent velocity profile $u_x$  (possibly a multi-layer model)
and different boundary conditions than~(\ref{eq:notension}) and~(\ref{eq:nofriction}) 
(with surface tension and friction at the bottom),
which may lead to find physical regimes where $\tau_{xz}$ is not negligible;
 \item give a rigorous mathematical meaning and enhance numerical simulations (well-balanced second-order reconstructions)
for non-standard systems of equations like the new one presented here.
\end{itemize}
We note that multi-layer models are also a path to the modelling of some important physical situations,
like a thin layer of polymeric fluids on water to forecast the efficiency of oil slick protection plans.

\appendix

\section{Convexity of the energy}
\label{app:convexity}

In order to check the convexity of $\widetilde E$ in \eqref{eq:energy} with respect to general variables,
we use a Lagrange transformation, see for example Lemma 1.4 in~\cite{bouchut-2004}.
Thus $\widetilde E$ is a convex function of
$$
\left(h,hu^0_x,h\varpi^{-1}\left(\frac{\sigma_{xx}^{-1/2}}{h}\right),h\varsigma^{-1}\left(\frac{\sigma_{zz}^{1/2}}{h}\right)\right)
$$
for given smooth invertible functions $\varpi$, $\varsigma$, if and only if $\widetilde E/h$ is a convex function of the Lagrangian variables 
$$
V = \left(\frac1h,u^0_x,\varpi^{-1}\left(\frac{\sigma_{xx}^{-1/2}}{h}\right),\varsigma^{-1}\left(\frac{\sigma_{zz}^{1/2}}{h}\right)\right)
\,.
$$
Let us denote by $V_i$, $i=1,\ldots,4$ the entries of the vector $V$,
then the Lagrangian energy writes
$$
\frac{\widetilde E}{h} = \frac12 V_{{2}}^{2} + \frac{g}2 \frac1{V_1}+gb
 + \frac{\eta_{p}}{4\lambda} \left( 
      \frac{V_1^2}{\varpi\left(V_3\right)^{2}}
      + \frac{\varsigma\left( V_4 \right)^{2}}{V_1^2}
      - \ln\left( \frac{\varsigma\left(V_4\right)^{2}}{\varpi\left(V_3\right)^{2}} \right)-2
  \right).
$$
Introduce now the notation
$$
	\Omega(V_3)=2\ln\varpi(V_3),\qquad
	\zeta(V_4)=-2\ln\varsigma(V_4).
$$
Clearly we only need to look at the convexity with respect to $(V_1,V_3,V_4)$,
and the Hessian matrix $\mathcal{H}$ of $\widetilde E/h$ with respect to these variables (at fixed $b$) is given by
$$ \frac{4\lambda}{\eta_p}\mathcal{H}=\left[ 
\begin {array}{ccc}
\frac{4\lambda g}{\eta_p}\frac{1}{V_1^3} +  2e^{-\Omega} + \frac{6e^{-\zeta}}{V_1^4}
&
-2 V_1e^{-\Omega}\Omega'
&
2\frac{e^{-\zeta}\zeta'}{V_1^3}
\\
-2V_1e^{-\Omega}\Omega'
&
    V_1^2e^{-\Omega}\bigl(\Omega'^2-\Omega''\bigr)+ \Omega''
&
0
\\ 
2\frac{e^{-\zeta}\zeta'}{V_1^3}
&
0
&
  \frac{e^{-\zeta}}{V_1^2}\bigl(\zeta'^2-\zeta''\bigr)+\zeta''
\end {array} \right]
\,,
$$
where prime denotes the derivative with respect to the involved $V_i$.
Since $V_1$ can take any positive value at fixed $V_3$ or $V_4$,
the positivity of the diagonal terms give the necessary conditions
$$
0<\Omega''(V_3)<\Omega'(V_3)^2,
\qquad
0<\zeta''(V_4)<\zeta'(V_4)^2.
$$
Then, writing the positivity of the determinant of the $2\times 2$ upper left
submatrix of $\mathcal{H}$, and looking at the dominant term when
$V_1\rightarrow\infty$ yields the necessary condition
$$2e^{-2\Omega}(\Omega'^2-\Omega'')-4e^{-2\Omega}\Omega'^2>0.$$
Obviously there is no function $\Omega(V_3)$ satisfying these conditions,
and $\widetilde E$ is never convex with respect to the considered variables.

On the contrary, if we choose the physically natural, but non-conservative,
variables $q=(h,hu_x^0,h\sigma_{xx},h\sigma_{zz})$, then using the Lagrangian variables 
$$
W = \left(\frac1h,u^0_x,\sigma_{xx},\sigma_{zz}\right) \,,
$$
one can write
$$
\frac{\widetilde E}{h} = \frac{(u_x^0)^{2}}2 + \frac{gh}2 +gb + \frac{\eta_{p}}{4\lambda}
 \left( \sigma_{xx} + \sigma_{zz} - \ln\left(\sigma_{xx}\sigma_{zz}\right)-2 \right),
$$
which is obviously convex with respect to $W$ (at fixed $b$). We conclude that $\widetilde E$
is convex with respect to $q$.

\bibliographystyle{amsplain}

\providecommand{\bysame}{\leavevmode\hbox to3em{\hrulefill}\thinspace}
\providecommand{\MR}{\relax\ifhmode\unskip\space\fi MR }
\providecommand{\MRhref}[2]{%
  \href{http://www.ams.org/mathscinet-getitem?mr=#1}{#2}
}
\providecommand{\href}[2]{#2}

\end{document}